\newcommand{\iprod}[1]{\langle#1\rangle}
\newcommand{\se}{\setcounter{equation}{0}}
\newcommand{\Grid}{\mathcal{G}}
\newcommand{\pw}{\varPi_{{k}}}
\newcommand{\p}{{\mathcal P}}
\newcommand{\cT}{\mathcal{T}}
\newcommand{\W}{{\mathcal W}}
\newcommand{\vhs}{V_h^*}
\newcounter{bean}
\newenvironment{romenum}{\begin{list}{{(\roman{bean})}}
{\usecounter{bean}}}{\end{list}}
\newcommand{\cL}{\mathcal{L}}
\newcommand{\Ba}{\mathcal{B}^\alpha}
\newcommand{\I}{\mathcal{I}}
\newcommand{\Jmas}{\mathcal{I}^{1-\alpha}_{ad}\,}
\begin{document}
\title{Finite volume element method for two-dimensional fractional  subdiffusion problems\thanks{The support of  the Science Technology Unit at KFUPM through  King Abdulaziz City for Science and
Technology (KACST) under National Science, Technology and Innovation Plan (NSTIP) project No. 13-MAT1847-04
 is gratefully acknowledged.}}

\author{\sc Samir Karaa \thanks{
Department of Mathematics and Statistics, Sultan Qaboos University,
P. O. Box 36, Al-Khod 123,
Muscat, Oman.
 Email: skaraa@squ.edu.om},
\sc Kassem Mustapha \thanks{
 Department of Mathematics and Statistics, King Fahd University
of Petroleum and Minerals, Dhahran, 31261, Saudi Arabia. Email: kassem@kfupm.edu.sa}
{\sc and}
\sc Amiya K. Pani{\thanks{
Department of Mathematics, Industrial Mathematics Group, Indian
Institute of Technology Bombay,
Powai, Mumbai-400076
Email: akp@math.iitb.ac.in}}}
\maketitle
\pagestyle{headings} \markboth{S. Karaa,
K. Mustapha  and A.  Pani}{\rm FINITE VOLUME METHODS FOR FRACTIONAL DIFFUSION PROBLEMS}

\begin{abstract}
{In this paper, a semi-discrete  spatial finite volume
(FV) method is proposed and analyzed for approximating solutions of anomalous subdiffusion equations
involving a temporal fractional derivative of order $\alpha \in (0,1)$ in a two-dimensional convex polygonal domain. Optimal error estimates in $L^\infty(L^2)$-
norm is shown to hold. Superconvergence result is proved and as a consequence, it is established that
quasi-optimal order of convergence in $L^{\infty}(L^{\infty})$ holds. We also consider a fully discrete
scheme that employs  FV method   in
space, and  a piecewise linear discontinuous Galerkin method  to discretize in temporal direction.  It is, further, shown that convergence rate is of order
$O(h^2+k^{1+\alpha}),$ where $h$ denotes the space discretizing parameter and $k$ represents the temporal
discretizing parameter. Numerical
experiments indicate optimal convergence rates in both time and space, and also illustrate that the imposed regularity assumptions are pessimistic. }
{Fractional diffusion equation,  finite volume element, discontinuous Galerkin method, variable meshes, convergence analysis}\end{abstract}

\section{Introduction}
\noindent
Let $\Omega$ be a bounded, convex polygonal domain in $\mathbb{R}^2$  with
boundary $\partial \Omega$, and let $f$ and $u_0$ be given functions defined on their
respective domains. Consider the  subdiffusion equation:
\begin{subequations}\label{eq: fractional diffusion}
\begin{alignat}{2}\label{a1}
&u'(x,t)+\Ba \cL u(x,t)=f(x,t) &&\quad\mbox{ in }\Omega\times (0,T],
\\  \label{a2}
&u(x,t)= 0 &&\quad\mbox{ on }\partial\Omega\times (0,T],
\\   \label{a3}
&u(x,0)=u_0(x) &&\quad\mbox{ in }\Omega,
\end{alignat}
\end{subequations}
where $\cL u=-\Delta u$, $u'$ is the partial derivative of
  $u$ with respect to time, $\Ba:={^R}{\rm D}^{1-\alpha}$ is the Riemann--Liouville  fractional
derivative in time defined by:   for $0<\alpha<1$,
\begin{equation} \label{Ba}
\Ba \varphi(t):=\frac{\partial }{\partial t}\I^{\alpha}\varphi(t):=\frac{\partial }{\partial t}\int_0^t\omega_{\alpha}(t-s)\varphi(s)\,ds\quad\text{with} \quad
\omega_{\alpha}(t):=\frac{t^{\alpha-1}}{\Gamma(\alpha)}
\end{equation}
with $\I^{\alpha}$ being  the temporal Riemann--Liouville fractional integral operator of order $\alpha$.

Fractional  diffusion models  received considerable attention over the last two decades from both practical and theoretical point of view. Researchers have found numerous porous media systems in which some key underlying random motion conform to a model where the diffusion is not classical, it is instead anomalously slow (fractional subdiffusion) or fast (super-diffusion). For example, the fractional diffusion problem \eqref{eq: fractional diffusion} captures the dynamics of some subdiffusion processes, where the growth of the mean square displacement  is slower compared to a Gaussian process, see Podlubny (1999) for more detail.  The modeling of this problem is actually  based on continuous time random walks  and master equations with power law waiting time densities (Henry \& Wearne (2000)), where $u$ represents the probability density function for finding a particle at location $x$ and at time $t$ (with waiting time and the jumps that are statistically independent). Fractional diffusion models  have been successfully used to describe diffusion in several phenomena including  media with fractal geometry (Nigmatulin (1986)), highly heterogeneous aquifer (Adams \& Gelhar (1992)),  and underground environmental problem (Hatano \& Hatano (1998)).

Many authors have proposed various techniques for approximating the solution $u$  of
 \eqref{eq: fractional diffusion}, however obtaining sharp error bounds under reasonable
 regularity assumptions on $u$ has proved challenging. Several types of finite difference schemes
 (implicit and explicit)  were investigated; see   Chen et al. (2012), Cuesta et al. (2006), Cui (2009), Langlands \& Henry (2005), Mustapha (2011),
 Quintana-Murillo \& Yuste (2013), Zhang et al. (2014) and related reference, therein.  The error analyses in most
 of these papers typically assume
that the solution $u$  is sufficiently smooth, including at $t = 0$. This enforces imposing compatibility
conditions on the given data.
In earlier works on time-stepping discontinuous Galerkin (DG)  method (including $hp$-versions)
combined with spatial standard Galerkin method by the second author and
McLean (McLean \& Mustapha (2009, 2015), Mustapha (2015), Mustapha \& McLean (2013)),
unbounded time derivatives of $u$ as $t \to 0$ was allowed (which is typically the case)
in the error analysis, also the case of non-smooth initial data was included. Variable time steps were  employed to compensate the singular behavior
of $u$, and consequently maintain optimal order rates of convergence.

Our main aim  is to propose  and analyze a method
using exact integration in time and  finite volume (FV)
method     for the space discretization for the two-dimensional fractional model \eqref{eq: fractional diffusion}. Then,  we  combine the FV scheme  in space with a piecewise-linear time-stepping DG scheme  which will then define a fully-discrete scheme. Compared to finite differences and finite elements, FV method is easier to implement on structured as well as unstructured
meshes and offers flexibility in tackling domains with complex boundaries. Further, it ensures local conservation
property of the fluxes which makes this method more attractive in applications. The approach followed here
is to formulate the problem in  the Petrov-Galekin frame  using two different  meshes to define
the trial space and test space, see Bank \& Rose (1987), Cai (1991)  and S\"uli (1991)
for some earlier results in this direction. This frame work helps us to derive error estimates
which are similar in spirit to tools developed for the error analysis of finite element method.
The choice of the FV method  for the problem under consideration is as used in Chatzipantelidis et al. (2004), Ewing et al. (2000), and
Chou \& Li (2000).

The major  contribution of the present article can be summarized as follows.
We first prove  that, under certain regularity assumptions on $u$ of problem
\eqref{eq: fractional diffusion},  the error of the FV approximation
to the solution $u$   in the
$L^\infty(L^2)$-norm (that is, $L^\infty\bigr(0,T;L^2(\Omega)\bigr)$-norm)  converge with order $h^2$, where $h$  is the maximum diameter
of the elements of the spatial mesh; see Theorem \ref{H2}. The imposed regularity conditions on $u$
can be satisfied by imposing some compatibility conditions on the given data taking into consideration
that the derivative of $u$ is not bounded near $t=0$, see the discussion after Theorem \ref{H2}.
In addition, under more restrictive regularity assumptions, we show errors of order $h^2$ in
the stronger $L^\infty(L^\infty)$-norm, see Theorem \ref{sup-conv-2}.
Since in the limiting case $\alpha \to 0$ the problem \eqref{eq: fractional diffusion} reduces to
the classical heat problem,  these convergence results extend those obtained in Chatzipantelidis et al. (2004, 2009)
and Chou \& Li (2000) for the heat equation. This extension is indeed not straightforward,  we make the
full use of several important properties of the fractional derivative operator and also use clever
steps (see for instance the proof of Lemma \ref{sup-conv-1}) to achieve our goal.
In the second part of the paper, we derive the error from the fully-discrete scheme (DG in time and  FV in space) for \eqref{eq: fractional diffusion}. In the $L^\infty(L^2)$-norm,
we show convergence of order $h^2+k^{1+\alpha}$ (that is, suboptimal in time) where $k$ is the maximum
time step-size. Proving this rate of convergence in the stronger $L^\infty(L^\infty)$-norm
is beyond the scope of the paper due to several technical difficulties. It is worthy to mention that the numerical results demonstrate optimal convergence rate in both time and space in the $L^\infty(L^\infty)$-norm,
and also shows that the regularity conditions on $u$ are pessimistic. In this regard, the approach
used in the time-stepping DG error analysis in Mustapha (2015) might be beneficial to prove a better
convergence rate  in time,  $k^{\frac{3+\alpha}{2}}$ instead of $k^{1+\alpha}$.

An outline of the paper is as follows. In the next section, we introduce some notations and state some important properties of the time fractional operator $\Ba$.  In Section \ref{sec: FVM}, we introduce our semi-discrete FV scheme in space for problem
  \eqref{eq: fractional diffusion} and define some
  interpolation operators that play an important role in our error analysis. Section \ref{sec: FV analysis}
  is devoted to prove the main convergence result from the FV discretization, Theorems  \ref{H2} and
  \ref{sup-conv-2}. Particularly relevant
to this {\it a priori} error analysis is the appropriate use of several important properties of
the operator $\Ba$. In Section \ref{sec: fully discrete numerical method}, we define our fully-discrete DG FV  scheme and show the corresponding convergence results   in the following section, see Theorem \ref{thm: fully discrete}. Finally, in Section 6, we present some numerical results to demonstrate our theoretical achievements and   illustrate optimal rates of convergence in both time and space  (not only in space  as the theory suggested) in the $L^\infty(L^\infty)$-norm  under weaker regularity assumptions than the theory required.

\section{Notation and Preliminaries.}
\se

Denote by $(\cdot ,\cdot)$ and $\|\cdot\|$  the $L^2$-inner product and its
induced norm on $L^2(\Omega)$, respectively. The $L^\infty(\Omega)$-norm is denoted by $\|\cdot\|_\infty$. Let $H^m(\Omega)=W^{m,2}(\Omega)$ denote the standard
Sobolev space equipped with the usual norm $\|\cdot\|_m$.  With
$H_0^1(\Omega)=\{v\in H^1(\Omega): v=0 \;\mbox{on}\;  \partial \Omega \},$
let $A(\cdot, \cdot):H_0^1 (\Omega)\times H_0^1(\Omega)\rightarrow\mathbb{R}$ be the bilinear form
associated with the operator $\cL$ which is symmetric and positive definite  on $H_0^1 (\Omega)$.
Then, the weak formulation for \eqref{eq: fractional diffusion} is to seek
$u:(0,T]\longrightarrow H^1_0(\Omega)$ such that
\begin{equation} \label{weak}
(u',v)+ A(\Ba u,v)=  (f,v)\quad
\forall ~v\in H_0^1(\Omega)\quad{with}~~u(0)=u_0.
\end{equation}
 Note that for $\varphi\in W^{1,1}(0,T)$, $\Ba$
satisfies the following property (Theorem A.1, McLean (2012)):
\begin{equation*}
\int_0^T \Ba \varphi(t)\,\varphi(t)\,dt
\geq  c_\alpha T^{\alpha-1}\int_0^T|\varphi(t)|^2\,dt\;\;\; \mbox{ for } \; 0 < \alpha < 1,
\end{equation*}
where $c_\alpha=\pi^{1-\alpha}(1-\alpha)^{1-\alpha}(2-\alpha)^{\alpha-2}
\sin(\alpha\pi/2)$ is a positive constant.

In contrast,  the Riemann--Liouville operator $\I^{\alpha}$ has also some positivity property but with a weaker  lower bound compared to the  one of the operator $\Ba.$ More precisely,
by Lemma 3.1(ii) in Mustapha \& Sch\"otzau (2014), it follows that for piecewise continuous functions $\varphi:[0,T] \to \mathbb{R},$
\begin{equation}\label{J-P}
\int_0^T\I^\alpha\varphi(t)\,\varphi(t)\,dt\geq
\cos(\alpha \pi/2)\int_0^T|\I^{\alpha/2}\varphi(t)|^2\,dt \geq 0~~{\rm  for}~~0<\alpha<1\,.
\end{equation}
Since the  bilinear form $A(\cdot, \cdot)$ is symmetric
positive definite,  the following holds: for
$W^{1,1}(0,T;H_0^1(\Omega))$,
\begin{equation}\label{positive definite property of Ba}
\int_0^TA(\Ba\varphi(t),\varphi(t))\,dt \geq c_\alpha T^{\alpha-1}
\int_0^T\| \nabla \varphi(t)\|^2\,dt.
\end{equation}
In the sequel, we shall use the
adjoint operator $ \I_{ad}^{\alpha}$ of $\I^{\alpha}$ (Lemma 3.1,  Mustapha \& McLean (2013)):
\begin{equation}\label{Bas}
\I_{ad}^{\alpha}\, \varphi(t)=
\int_t^T\omega_{\alpha}(s-t)
\varphi(s)\,ds \quad\mbox{for }
\varphi\in C^0[0,T]~~{\rm with}~~ 0<\alpha<1.
\end{equation}
For later use, we recall the following property (Section 3, Cockburn \& Mustapha (2015)):
\begin{equation}\label{identity}
\I^{1-\alpha}\bigl(\Ba\varphi\bigr)(t)=\varphi(t) \quad\mbox{for }
\varphi\in C^1(0,T).
\end{equation}

\section { Finite volume element method}\label{sec: FVM}
\se
This section deals with primary and dual meshes on the domain $\Omega$, construction of finite dimensional spaces, FV element formulation and some preliminary results.

Let $\cT_h$ be a family of regular (quasi-uniform) triangulations of
the closed, convex  polygonal domain $\overline{\Omega}$ into triangles  $K,$
and let $h=\max_{K\in \cT_h}h_{K},$ where $h_{K}$ denotes the diameter  of $K.$  Let $N_h$ be set of nodes or vertices, that is, $N_h :=\left\{P_i:P_i~~\mbox{ is a vertex of the element }~K \in
\cT_h~\mbox{and}~P_i\in \overline{\Omega}\right\}$ and let
$N_h^0$ be the set of interior nodes in $\cT_h.$
Further, let $\cT_h^*$ be the dual mesh associated with the primary mesh $\cT_h,$ which is defined as follows. With $P_0$ as an interior node of the triangulation $\cT_h,$ let  $P_i\;(i=1,2\cdots
m)$ be its adjacent nodes (see, Figure ~\ref{fig:mesh} with $m=6$ ). Let $M_i,~i=1,2\cdots
m$ denote the midpoints of $\overline{P_0P_i}$ and let $Q_i,~i=1,2\cdots
m,$  be  the barycenters of the triangle $\triangle P_0P_iP_{i+1}$ with
$P_{m+1}=P_1$. The {\it control volume}
  $K_{P_0}^*$ is constructed  by joining successively $ M_1,~ Q_1,\cdots
  ,~ M_m,~ Q_m,~ M_1$. With $Q_i ~(i=1,2\cdots
m)$ as the nodes of $control~volume~$ $K^*_{p_i},$ let $N_h^*$ be the set of all dual nodes
$Q_i$. For a  boundary
node $P_1$,  the control volume $K_{P_1}^*$ is shown in Figure ~\ref{fig:mesh}. Note that the union
of the control volumes forms a partition $\cT_h^*$ of $\overline{\Omega}$.

\begin{figure}
 \begin{center}
 \includegraphics*[width=10.0cm,height=5.0cm]{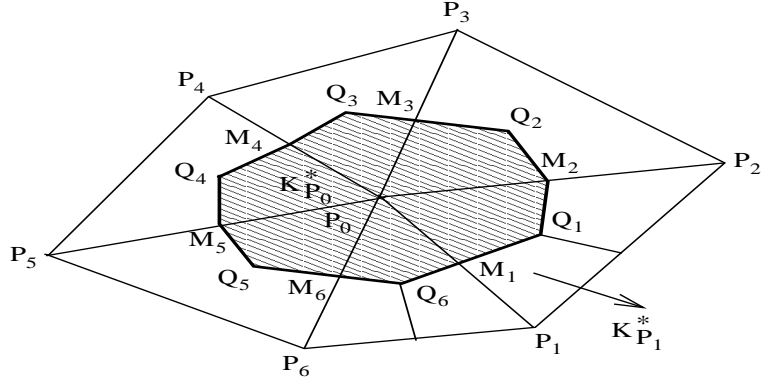}
    \caption{The control Volumes for the boundary nodes}
      \label{fig:mesh}
   \end{center}
\end{figure}

We consider a FV element  discretization of (\ref{eq: fractional diffusion})
in the standard $C^0$-conforming piecewise linear  finite element space $V_h$ on the primary mesh $\cT_h$, which is defined by
$$V_h=\{v_h\in C^0(\overline {\Omega})\;:\;v_h|_{K}\;\mbox{is linear for all}~ K\in \cT_h\; \mbox{and} \; v_h|_{\partial \Omega}=0\},$$
and  the dual volume element space $\vhs$ on the dual mesh $\cT^*_h$ given by
$$\vhs=\{v_h\in L^2(\Omega)\;:\;v_h|_{K_{P_0}^*}\;\mbox{is constant for all}\; K_{P_0}^*\in \cT_h^*\; \mbox{and}\; v_h|_{\partial \Omega}=0\}.$$

The semi-discrete FV element  formulation for \eqref{eq: fractional diffusion} is
to seek  $u_h:(0,T]\longrightarrow V_h$ such that
\begin{equation} \label{semi}
(u_h',v_h)+ A_h(\Ba u_h,v_h)=  (f,v_h)\quad
\forall v_h\in \vhs ~~~~~~~~~~~~~~~~~~~~~
\end{equation}
with given  $u_h(0)\in V_h$ to be defined later.
Here, $A_h(\cdot, \cdot)$ is  defined by
\begin{equation} \label{Ah}
A_h(w_h,v_h)= -\sum_{P_i\in N_h^0} v_h(P_i) \int_{\partial K_{P_i}^*}
\nabla w_h\cdot{\bf n}\,ds\quad \forall~ w_h\in V_h~{\rm and}~v_h\in  \vhs\,,
\end{equation}
 with ${\bf n}$ denoting the outward unit normal to the boundary of the control volume
$K_{P_i}^*$.  For $v\in H^2(\Omega)$, a use of Green's formula
yields
\begin{equation}\label{eq: h2 identity}
(\mathcal{L}v,v_h)=A_h(v,v_h),\quad \forall~v_h\in  \vhs\,.
\end{equation}
Moreover as $\mathcal{L}=-\Delta,$ the following identity holds:
\begin{equation}\label{sup-1}
A(w_h,\chi)=A_h(w_h,\Pi_h^\ast\chi)\qquad \forall w_h,\;\chi\in V_h.
\end{equation}
Hence, taking  the $L^2$-inner product of (\ref{eq: fractional diffusion}) with
$v_h\in \vhs$ yields
\begin{equation}\label{consistency}
(u',v_h)+ A_h(\Ba u,v_h) =   (f,v_h)\quad\forall ~v_h \in \vhs.
\end{equation}

For the error analysis, we first introduce two interpolation operators.
Let $\Pi_h:C^0(\bar{\Omega})\longrightarrow V_h$ be the piecewise linear interpolation operator and
$\Pi_h^*:C^0(\bar{\Omega})\longrightarrow \vhs$ be the piecewise constant interpolation operator. These interpolation operators are defined,  respectively,  by
\begin{equation}
\Pi_h u=\sum_{P_i\in N_h^0}u(P_i)\phi_i(x)\;~\mbox{and}~\; \Pi_h^*u=\sum_{P_i\in N_h^0}u(P_i)\chi_i(x).
\label{naa}
\end{equation}
It is known that   $\Pi_h$ has the following approximation property (Ciarlet (1978)):
\begin{equation}
\|\psi-\Pi_h\psi\|_0\leq Ch^2\|\psi\|_2\quad {\rm for}\quad \psi \in H^2(\Omega)\,.
\label{1.6}
\end{equation}
We state next  the properties of
the interpolation operator $\Pi_h^*.$ For a proof, see  (pp. 192, Li et al. (2000)). For convenience,
we introduce the following notations:  $  \langle \phi, v\rangle := ( \phi,\Pi_h^*v )$ for $\phi \in L^2$ and $v\in C^0(\bar{\Omega})$.

\begin{lemma}\label{lem: norm equivlence}
\label{E}
 The following statements  hold true.
 \begin{romenum}
\item
 $\langle \cdot, \cdot\rangle$ defines an inner-product on
  $V_h \times V_h$ with its induced norm denoted by $ |\|\cdot|\|.$
\item The norms  $|\|\cdot|\|$  and
$\|\cdot\| $  are equivalent on $ V_h$\,.
\item The operator $\Pi^*_h$ is stable in the following sense: $ \|\Pi_h^* \chi\| \leq C \|\chi\|$ for any $\chi \in V_h.$
 \end{romenum}
\end{lemma}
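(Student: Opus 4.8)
The plan is to work on a single triangle $K \in \cT_h$ with vertices $P_1, P_2, P_3$ and to reduce everything to an explicit local computation, since the global pairing $\langle \phi, v \rangle = (\phi, \Pi_h^* v)$ decomposes as a sum over the control-volume pieces inside each $K$. First I would fix $K$ and compute the local mass-type matrix: for $v = \sum_j v(P_j)\phi_j$ and $w = \sum_i w(P_i)\phi_i$ in $V_h$, one has $\langle w, v \rangle|_K = \sum_i w(P_i) \int_{K \cap K_{P_i}^*} \Pi_h^* v \, dx = \sum_{i,j} w(P_i) v(P_j)\, m^K_{ij}$, where $m^K_{ij} = |K \cap K_{P_i}^* \cap K_{P_j}^*|$ (using that $\Pi_h^* v$ equals $v(P_j)$ on $K_{P_j}^*$). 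Because the dual mesh is built from midpoints and barycenters, each of the three sub-regions $K \cap K_{P_i}^*$ has area $|K|/3$, and the mutual overlaps give the matrix $\frac{|K|}{3} I$ plus off-diagonal corrections; a short calculation shows the local matrix $M^K = \frac{|K|}{12}\begin{pmatrix} 2 & 1 & 1 \\ 1 & 2 & 1 \\ 1 & 1 & 2\end{pmatrix}$, which is exactly the consistent $P_1$ mass matrix on $K$ — in particular symmetric and positive definite with eigenvalues $\frac{|K|}{12}$ (double) and $\frac{|K|}{3}$. This simultaneously establishes (i), since summing symmetric positive-definite local matrices over $K$ with a compatible global ordering yields a symmetric positive-definite global Gram matrix on $V_h$, so $\langle\cdot,\cdot\rangle$ is an inner product and $|\|\cdot|\|$ a genuine norm.

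For (ii), the norm equivalence on the finite-dimensional space $V_h$ would follow from comparing the local matrix $M^K$ with the local $L^2$ mass matrix $\tilde M^K = \frac{|K|}{12}\begin{pmatrix} 2 & 1 & 1 \\ 1 & 2 & 1 \\ 1 & 1 & 2 \end{pmatrix}$; here they coincide, so in fact $|\|\chi|\| = \|\chi\|$ exactly, but to be safe against a slightly different dual-mesh convention I would instead argue abstractly: both quadratic forms are sums over $K$ of forms controlled above and below by $c_1 |K| \sum_j |\chi(P_j)|^2 \le (\cdot) \le c_2 |K| \sum_j |\chi(P_j)|^2$ with constants depending only on shape-regularity, hence are equivalent with constants independent of $h$. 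For (iii), I would write $\|\Pi_h^* \chi\|^2 = \sum_{P_i \in N_h^0} |\chi(P_i)|^2 |K_{P_i}^*|$ and bound $|K_{P_i}^*| \le C \sum_{K \ni P_i} |K|$; combining with the lower bound $\|\chi\|^2 \ge c \sum_K |K| \sum_{P_j \in K} |\chi(P_j)|^2$ from the equivalence of $\|\cdot\|$ with the nodal $\ell^2$ norm weighted by element areas, and using that each vertex belongs to a bounded number of triangles (shape-regularity), gives $\|\Pi_h^* \chi\| \le C\|\chi\|$.

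The main obstacle — really the only nontrivial point — is the local area bookkeeping in the first step: one must verify that the barycenter-and-midpoint construction of the control volumes partitions each triangle $K$ into exactly three congruent-in-area pieces and compute the pairwise overlaps $|K \cap K_{P_i}^* \cap K_{P_j}^*|$ correctly, so that the local matrix comes out symmetric positive definite with the stated eigenvalues. Once that explicit computation is in hand, (i)–(iii) are immediate consequences of summing positive-definite local contributions and invoking shape-regularity to control the number of elements meeting at a node; all of this is standard (it is the content of the cited reference, pp. 192 of Li et al. (2000)), so I would present the local matrix computation in a line or two and state (i)–(iii) as corollaries.
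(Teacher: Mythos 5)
The paper does not actually prove this lemma---it cites Li et al.\ (2000), p.~192---so your plan of giving a self-contained proof by localizing to each triangle, computing the local Gram matrix, showing it is symmetric positive definite, and summing with shape-regularity is the right (and standard) architecture. However, the local computation that you yourself identify as ``the only nontrivial point'' is carried out incorrectly, in two ways. First, since $\langle w,v\rangle=(w,\Pi_h^*v)$ pairs the piecewise \emph{linear} $w$ against the piecewise constant $\Pi_h^*v$, the local entries are $m^K_{ij}=\int_{K\cap K_{P_j}^*}\phi_i\,dx$, not the overlap areas $|K\cap K_{P_i}^*\cap K_{P_j}^*|$; the latter would be the Gram matrix of $(\Pi_h^*w,\Pi_h^*v)$, and since the three control-volume pieces tile $K$ with disjoint interiors it is just the diagonal matrix $\tfrac{|K|}{3}I$, not anything with off-diagonal entries. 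Second, the correct matrix is \emph{not} the consistent $P_1$ mass matrix: in barycentric coordinates $K\cap K_{P_j}^*=\{\lambda_j\ge\lambda_i,\ i\ne j\}$, and a direct integration gives
$M^K=\tfrac{|K|}{108}\bigl(\begin{smallmatrix}22&7&7\\7&22&7\\7&7&22\end{smallmatrix}\bigr)$,
whereas the mass matrix is $\tfrac{|K|}{12}\bigl(\begin{smallmatrix}2&1&1\\1&2&1\\1&1&2\end{smallmatrix}\bigr)=\tfrac{|K|}{108}\bigl(\begin{smallmatrix}18&9&9\\9&18&9\\9&9&18\end{smallmatrix}\bigr)$. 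In particular your parenthetical claim that $|\|\chi|\|=\|\chi\|$ exactly on $V_h$ is false.

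The conclusions nevertheless survive once the bookkeeping is fixed: the corrected $M^K$ has eigenvalues $\tfrac{15}{108}|K|$ (double) and $\tfrac{36}{108}|K|=\tfrac{|K|}{3}$ (consistent with row sums $\int_K\phi_i=|K|/3$), so it is positive definite uniformly in $K$, and it is symmetric because barycentric coordinates reduce the computation to the reference triangle, where symmetry under permutation of vertices is manifest---note that symmetry of $\langle\cdot,\cdot\rangle$ is itself part of assertion (i) and deserves an explicit line, since $(w,\Pi_h^*v)=(v,\Pi_h^*w)$ is not obvious a priori. With that in hand, your hedged ``abstract'' version of (ii)---both quadratic forms are uniformly equivalent to $|K|\sum_j|\chi(P_j)|^2$ on each element---goes through, and your argument for (iii) via $\|\Pi_h^*\chi\|^2=\sum_i|\chi(P_i)|^2\,|K_{P_i}^*|$ plus shape-regularity is fine. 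So: correct strategy, but the explicit local matrix, which you lean on for positive definiteness, is wrong as stated and must be recomputed.
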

\section{ A Priori Error Estimates}\label{sec: FV analysis}
\se
This section deals with  {\it a priori} optimal order error estimates for the semi-discrete
FV scheme (\ref{semi}). To do so,
we split the error    as:
$$u_h-u=(u_h-\bar R_h u)+(\bar R_h u-u)=:\theta+\xi,$$
where  $\bar R_h :H_0^1(\Omega)\cap H^2(\Omega) \rightarrow V_h$ is the finite volume elliptic projection
operator defined by
\begin{equation}\label{bar Ritz}
A_h(\bar R_h v,\chi)= A_h(v,\chi)\qquad \forall\, \chi\in V_h^*.
\end{equation}
For each $t\in (0,T]$, the projection error $\xi(t)$ satisfies the following estimates (Chou \& Li (2000)):
\begin{eqnarray}\label{rho-estimate}
\|\xi(t)\|_\ell + \|\xi'(t)\|_\ell \leq C h^{2-\ell} \Big(\|u(t)\|_{W^3_p} + \|u'(t)\|_{W^3_p}\Big) \quad {\rm for}
~\ell=0,1 \quad {\rm with}  ~~p>1.
\end{eqnarray}
Moreover, the following maximum estimate is also valid for $t\in (0,T]$
\begin{equation}\label{rho-estimate-infty}
\|\xi(t)\|_{\infty}\leq C h^2 |\log\left( h\right)|\Big(\|u(t)\|_3+ \|u(t)\|_{W^{2,\infty}(\Omega)} \Big).
\end{equation}

\noindent
From  (\ref{semi}) and \eqref{consistency}, it follows that
 \begin{equation} \label{theta}
\iprod{\theta',\chi}+A_h(\Ba \theta,\Pi_h^*\chi) =
-(\xi',\Pi_h^*\chi) \quad \forall~\chi \in V_h.
\end{equation}
On substitution of (\ref{sup-1}) in (\ref{theta}),  we obtain
\begin{equation} \label{sup-2}
\iprod{\theta',\chi}+A(\Ba\theta,\chi)=
-(\xi',\Pi_h^*\chi) \quad \forall~\chi \in V_h\,.
\end{equation}

Below, we prove one of the main theorems of this section.
We may  choose
$u_h(0)= \Pi_h u_0,$ or even $L^2$-projection onto $V_h,$ then, using approximation property and the equivalence
of norms (ii) of Lemma \ref{lem: norm equivlence}, it follows that
$\||\theta(0)\|| \leq C\|\theta(0)\| \leq C h^2 \; \|u_0\|_2.$
In case, we choose  $u_h(0)=\bar R_h u_0,$ then $\theta(0)=0.$
\begin{theorem}
\label{H2}
Let $u$ and $u_h$  be the solutions of $(\ref{eq: fractional diffusion})$ and $(\ref{semi}),$
respectively. Further, let $u_h(0)$ be chosen so that
$\|u_0- u_h(0)\| = O(h^2).$
Then
for any $T>0$, there is a positive constant $C,$ which may depend on  $T$ and $\alpha,$ but independent of $h$
such that
$$
 \|(u_h-u)(T)\| \leq C\, h^2\left(\|u_0\|_3+\|u'\|_{L^1(H^3)}\right).
$$
\end{theorem}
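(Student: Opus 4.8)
The plan is to estimate $\theta$ and $\xi$ separately in the splitting $u_h-u=\theta+\xi$, since the projection error $\xi$ is already controlled by \eqref{rho-estimate} and standard regularity bounds; the real work is the estimate for $\theta$. First I would record that by \eqref{rho-estimate} with $\ell=0$ we have $\|\xi(T)\|\le Ch^2\|u(T)\|_{W^3_p}$, and that $\|u(T)\|_{W^3_p}$ (or $\|u(T)\|_3$) can be bounded by $\|u_0\|_3+\|u'\|_{L^1(H^3)}$ via $u(T)=u_0+\int_0^T u'$; so $\xi$ contributes the right-hand side of the claimed bound. It remains to show $\|\theta(T)\|\le Ch^2(\|u_0\|_3+\|u'\|_{L^1(H^3)})$.

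For $\theta$, I would start from the error equation \eqref{sup-2}, namely $\iprod{\theta',\chi}+A(\Ba\theta,\chi)=-(\xi',\Pi_h^*\chi)$ for all $\chi\in V_h$. The key device is to test not with $\theta$ itself but, following the standard trick for fractional-in-time problems, to integrate in time against the operator that pairs well with $\Ba$. Concretely, apply $\I^{1-\alpha}_{ad}$ or use the identity \eqref{identity} to turn $A(\Ba\theta,\cdot)$ into a coercive quantity after time-integration: choosing $\chi=\theta(t)$ and integrating over $(0,T)$, the second term becomes $\int_0^T A(\Ba\theta,\theta)\,dt\ge c_\alpha T^{\alpha-1}\int_0^T\|\nabla\theta\|^2\,dt\ge0$ by \eqref{positive definite property of Ba}, while $\int_0^T\iprod{\theta',\theta}\,dt=\tfrac12(\||\theta(T)\||^2-\||\theta(0)\||^2)$. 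This yields
\begin{equation*}
\tfrac12\||\theta(T)\||^2 + c_\alpha T^{\alpha-1}\int_0^T\|\nabla\theta\|^2\,dt \le \tfrac12\||\theta(0)\||^2 + \Big|\int_0^T(\xi',\Pi_h^*\theta)\,dt\Big|.
\end{equation*}
The term $\||\theta(0)\||^2\le Ch^4\|u_0\|_2^2$ by the hypothesis on $u_h(0)$ and norm equivalence. For the last term, I would use Lemma \ref{lem: norm equivlence}(iii) to bound $\|\Pi_h^*\theta\|\le C\|\theta\|$ and then Cauchy--Schwarz in time: $\int_0^T|(\xi',\Pi_h^*\theta)|\,dt\le C\|\xi'\|_{L^1(L^2)}\max_{[0,T]}\|\theta\|$. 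However, the left-hand side only controls $\||\theta(T)\||$, not $\max_{[0,T]}\|\theta\|$, so I would instead run the energy argument with $T$ replaced by an arbitrary $t^\*\in(0,T]$, take the sup, and absorb: with $\Phi(t^\*):=\max_{[0,t^\*]}\|\theta\|$ one gets $\Phi(t^\*)^2\le C(h^4\|u_0\|_2^2+\|\xi'\|_{L^1(L^2)}\Phi(t^\*))$, hence $\Phi(T)\le C(h^2\|u_0\|_2+\|\xi'\|_{L^1(L^2)})$. Finally $\|\xi'\|_{L^1(L^2)}\le Ch^2\|u'\|_{L^1(W^3_p)}$ by \eqref{rho-estimate} with $\ell=0$, which is dominated by $Ch^2(\|u_0\|_3+\|u'\|_{L^1(H^3)})$ after absorbing constants and using the embedding/regularity relation between $W^3_p$ and $H^3$ norms for suitable $p$.

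The main obstacle I anticipate is handling the coupling of the coercivity and the $\theta'$ term cleanly: the positivity \eqref{positive definite property of Ba} of $\int_0^T A(\Ba\theta,\theta)$ requires $\theta\in W^{1,1}(0,T;H_0^1)$, which needs a short regularity/density justification for the semidiscrete solution, and one must be careful that the inner product $\iprod{\cdot,\cdot}$ appearing in \eqref{sup-2} (rather than $(\cdot,\cdot)$) is the one that telescopes to $\tfrac12\||\theta(t)\||^2$ — this is exactly why Lemma \ref{lem: norm equivlence} is invoked. A secondary subtlety is that the estimate for $\xi(T)$ and for $\xi'$ is naturally in $W^3_p$-based norms from \eqref{rho-estimate}, so one needs to phrase the final bound so that these reduce to the $H^3$/$\|u_0\|_3$ quantities stated in the theorem; this is routine but should be stated carefully. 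Everything else — Cauchy--Schwarz, the Gronwall-type absorption, and the triangle inequality $\|u_h-u\|\le\|\theta\|+\|\xi\|$ — is standard.
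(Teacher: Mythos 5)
Your proposal follows essentially the same route as the paper's proof: the same splitting $u_h-u=\xi+\theta$, testing \eqref{sup-2} with $\chi=\theta$, integrating in time so that $\iprod{\theta',\theta}$ telescopes to $\tfrac12\||\theta\||^2$, discarding the nonnegative term $\int_0^t A(\Ba\theta,\theta)\,ds$ via \eqref{positive definite property of Ba}, bounding the right-hand side with Lemma \ref{lem: norm equivlence}(ii)--(iii), and closing with the same maximizing-time kick-back argument (the paper picks $t^*$ as the maximizer of $\||\theta(\cdot)\||$ and divides; your quadratic-inequality absorption for $\Phi$ is the same device). The argument is correct, and your explicit remarks on the $W^3_p$ versus $H^3$ norms in \eqref{rho-estimate} and on the justification needed for applying the positivity property to the semidiscrete $\theta$ are points the paper passes over silently.
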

\begin{proof}  Since $u_h-u=\xi+\theta,$ where the estimate of $\xi$ is given in \eqref{rho-estimate},
it is sufficient to estimate $\theta.$ To this end,   choose  $\chi=\theta$ in
(\ref{sup-2}) and obtain
\begin{equation}\label{k}
\iprod{\theta',\theta}+A(\Ba\theta,\theta)=
-(\xi',\Pi_h^*\theta).
\end{equation}
Integrating  from $0$ to $t$ and using $
\iprod{\theta',\theta}=\frac{1}{2}\frac{d}{dt} |\|\theta|\|^2$ yield
\begin{equation}\label{1.27}
\||\theta(t)\||^2+ 2\int_0^tA(\Ba\theta,\theta)\,ds \le |\|\theta(0)|\|^2+2\int_0^t |(\xi',\Pi_h^*\theta)(s)|\,ds\,.
\end{equation}
By the stability property  for the operator $\Pi_h^{*}$ in Lemma \ref{lem: norm equivlence} (iii),
and the  equivalence of norms  in Lemma \ref{lem: norm equivlence} (ii), we have
 $|(\xi',\Pi_h^*\theta)| \leq   \,\|\xi'\| \; \|\Pi_h^{*} \theta \|
 \leq C \|\xi'\| \; \||\theta \||.$
On substitution this  in \eqref{1.27}, and use the positivity property of $\Ba$
in \eqref{positive definite property of Ba} to obtain after simplification
\begin{equation}\label{theta-1}
\||\theta(t)\||^2 \leq \||\theta(0)\||^2 +
C\,\int_{0}^{t} \|\xi'(s)\|\;\||\theta(s)\||\;ds.
\end{equation}
Let $t^*\in [0,t]$ be such that
$
\||\theta(t^*)\|| := \max_{0\leq s\leq t} \,\||\theta(s)\||.$
Then, it is easy to check from (\ref{theta-1}) that
\begin{equation}\label{theta-2}
\||\theta(t)\|| \leq \||\theta(t^*)\|| \leq \||\theta(0)\|| + C\,\int_{0}^{t} \|\xi'(s)\|\;ds\,.
\end{equation}
Therefore, the desired error  estimate follows from the decomposition $u_h-u=\xi+\theta$, the inequality
$\|\theta(T)\|\le C\, |\|\theta(T)|\|$
by Lemma \ref{lem: norm equivlence} (ii),
the above bound, the finite volume elliptic
 projection error (\ref{rho-estimate}),  and the inequality $\|u(T)\|_3\le \|u_0\|_3+\int_0^T \|u'(t)\|_3\,dt.$
This completes the rest of the proof.
\end{proof}

Due to the singular behaviour of the solution $u$ of \eqref{eq: fractional diffusion} near $t=0$, some regularity and compatibility assumptions on the given data $u_0$ and $f$ are required to make sure that the term $\|u_0\|_3+\|u'\|_{L^1(H^3)}$ is bounded. Consequently, by Theorem \ref{H2},  the error for the spatial discretization by FV method  is of order $O(h^2)$. For instance, if $f\equiv 0$, we assume that $u_0 \in H^4(\Omega) \cap H^1_0(\Omega)$ and
$\cL u_0  \in H^1_0(\Omega)$, then by (Theorem 4.2, Mclean (2010)),
\begin{equation}\label{eq: regularity 1}
\|u'(t)\|_2+t^{\frac{\alpha}{2}}\|u'(t)\|_3 
\le C_1 \,t^{\alpha-1}\quad {\rm for}~~t>0\,.\end{equation}
Hence,  from Theorem \ref{H2}, $ \|(u_h-u)(t)\| \leq C\, h^2$ for $t>0.$  Here, we can argue that the assumptions on $u_0$ can be slightly relaxed. Instead, we assume that $u_0 \in H^3(\Omega) \cap H^1_0(\Omega)$ and $\cL u_0  \in H^1_0(\Omega)$, and again by (Theorem 4.2, Mclean (2010)), we arrive at
\begin{equation}\label{eq: regularity 2}
\|u'(t)\|_1+t^\frac{\alpha}{2}\|u'(t)\|_2+t^\alpha\|u'(t)\|_3 \le C\,t^{\alpha-1}\quad {\rm for}~~t>0\,.\end{equation}
Using the elliptic projection bound $\|\xi'(t)\|\leq C h \|u'(t)\|_2,$
and also the projection bound in \eqref{rho-estimate}, we observe for $\epsilon>0$ that
\begin{align*}
 \|\xi(t)\| &\leq \|\xi(0)\|+\int_0^t \|\xi'(s)\|\,ds\\
 &\le   C\, h^2\left(\|u_0\|_3+h^{-1}\int_0^\epsilon \|u'(s)\|_2\,ds+\int_\epsilon^t\|u'(s)\|_3\,ds\right)\quad{\rm for}~~t>0.
\end{align*}
Thus, by the regularity property \eqref{eq: regularity 2}, it follows that
\begin{equation}\label{eq: bound of xi}
\begin{aligned}
 \|\xi(t)\| &\leq C\, h^2\left(\|u_0\|_3+h^{-1}\int_0^\epsilon s^{\frac{\alpha}{2}-1}\,ds+\int_\epsilon^t s^{-1}\,ds\right)\\
& \leq C\, h^2\left(1+h^{-1}\epsilon^{\frac{\alpha}{2}}+ |{\rm log}(t/\epsilon)|\right)\\
&\le C\, h^2\left(1+ |{\rm log}\; h|\right)\quad{\rm with}~~ \epsilon={\min\{h^{\frac{2}{\alpha}},t\}}\,.
\end{aligned}
\end{equation}
Therefore,  a use of the obtained estimate of $\xi(t)$ in the proof of the Theorem \ref{H2} yields
\[
 \|(u_h-u)(t)\| \le  C\, h^2\left(1+ |{\rm log}\; h|\right).\]

Our next aim is to derive an estimate of order $O(h^2),$ but in the stronger $L^\infty(L^\infty)$-norm. To do so, we start by
  estimating $\theta'$ in the next lemma. This bound is needed
for showing  the super-convergence result of $\nabla\theta$ in
$L^\infty(L^2)$-norm. 

\begin{lemma}\label{lem-sup-1}
With $u_h(0)= \bar R_h u_0,$ there exists a positive constant $C$ independent of $h$ such that
\[
\int_0^t\|\theta'\|^2\,ds \leq C \int_0^t\|\xi'\|^2ds\quad{\rm  for}~~ 0<t\le T\,.\]
\end{lemma}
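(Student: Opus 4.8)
The plan is to differentiate the error equation \eqref{sup-2} in time and then test against $\theta'$, exploiting the positivity property \eqref{J-P} of the Riemann--Liouville integral operator $\I^\alpha$ rather than that of $\Ba$ itself. The key observation is that $\Ba\theta = \partial_t\I^\alpha\theta$, so if we differentiate \eqref{sup-2} we run into $\partial_t\Ba\theta = \Ba\theta'$ (formally, using $\partial_t\I^\alpha = \I^\alpha\partial_t$ on functions vanishing suitably at $t=0$, which is legitimate here because $\theta(0)=0$ when $u_h(0)=\bar R_h u_0$). Testing with $\theta'$ then produces the term $\int_0^t A(\Ba\theta',\theta')\,ds$, which by \eqref{positive definite property of Ba} is nonnegative; but a cleaner route is to \emph{not} differentiate the fractional term and instead integrate \eqref{sup-2} against $\theta'$ directly, handling the memory term by the coercivity \eqref{J-P}. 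Concretely, I would take $\chi=\theta'$ in \eqref{sup-2} to get
\[
\iprod{\theta',\theta'}+A(\Ba\theta,\theta')=-(\xi',\Pi_h^*\theta'),
\]
and then integrate from $0$ to $t$. The first term gives $\int_0^t\||\theta'|\|^2\,ds$, which by Lemma \ref{lem: norm equivlence}(ii) controls $\int_0^t\|\theta'\|^2\,ds$ from below. The right-hand side is bounded, via Lemma \ref{lem: norm equivlence}(iii) and Cauchy--Schwarz, by $\varepsilon\int_0^t\|\theta'\|^2\,ds+C_\varepsilon\int_0^t\|\xi'\|^2\,ds$, and the first piece is absorbed on the left.

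The crux is therefore to show that $\int_0^t A(\Ba\theta,\theta')\,ds \ge 0$ (or is at least bounded below by something controllable), since $\theta'$ rather than $\theta$ now appears in the second slot. Here I would use the identity $\Ba\theta=\partial_s\I^\alpha\theta$ together with an integration by parts in $s$: writing $A(\Ba\theta,\theta') = A(\partial_s\I^\alpha\theta,\theta')$, note $\partial_s\bigl(A(\I^\alpha\theta,\theta)\bigr) = A(\partial_s\I^\alpha\theta,\theta)+A(\I^\alpha\theta,\theta')$, which mixes the wrong pairings. The genuinely clean manipulation is instead to integrate by parts once more to convert $A(\Ba\theta,\theta')$ into a form involving $A(\Ba\theta',\theta)$ plus a boundary term at $s=t$; since $\theta(0)=0$ and $\I^\alpha\theta(0)=0$, the boundary contributions at $s=0$ vanish. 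More precisely, using $\int_0^t A(\partial_s\I^\alpha\theta,\theta')\,ds = [A(\I^\alpha\theta,\theta')]_0^t - \int_0^t A(\I^\alpha\theta,\theta'')\,ds$ is awkward because of $\theta''$; so the better strategy is to keep the term as $A(\I^\alpha\theta',\theta')$ after one integration by parts in the convolution, i.e. use $\I^\alpha(\theta') = \partial_t\I^\alpha\theta - \omega_\alpha(t)\theta(0) = \Ba\theta$ since $\theta(0)=0$, hence $A(\Ba\theta,\theta')=A(\I^\alpha\theta',\theta')$, and then \eqref{J-P} (applied componentwise with $A$ symmetric positive definite, writing $A(\varphi,\psi)$ as an inner product and expanding in the $A$-orthonormal eigenbasis) gives
\[
\int_0^t A(\I^\alpha\theta',\theta')\,ds \ge \cos(\alpha\pi/2)\int_0^t \|\nabla\I^{\alpha/2}\theta'\|^2\,ds \ge 0.
\]
This is exactly the step I expect to be the main obstacle: justifying the identity $\Ba\theta = \I^\alpha\theta'$ (which needs $\theta(0)=0$, hence the hypothesis $u_h(0)=\bar R_h u_0$), and then invoking \eqref{J-P} in the vector-valued / $A$-weighted setting rather than the scalar setting in which it was stated. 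Both are routine once the right formulation is identified, but getting there requires the "clever step" the authors allude to.

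With these pieces in hand the proof closes quickly: from
\[
\int_0^t\||\theta'|\|^2\,ds + \cos(\alpha\pi/2)\int_0^t\|\nabla\I^{\alpha/2}\theta'\|^2\,ds \le \tfrac12\int_0^t\|\theta'\|^2\,ds + C\int_0^t\|\xi'\|^2\,ds,
\]
we drop the nonnegative fractional term, use $\||\theta'|\|^2 \ge c\|\theta'\|^2$ from Lemma \ref{lem: norm equivlence}(ii) to get $c\int_0^t\|\theta'\|^2\,ds \le \tfrac12\int_0^t\|\theta'\|^2\,ds + C\int_0^t\|\xi'\|^2\,ds$, and absorb to conclude $\int_0^t\|\theta'\|^2\,ds \le C\int_0^t\|\xi'\|^2\,ds$, which is the claim. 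The only place the choice $u_h(0)=\bar R_h u_0$ is essential is in securing $\theta(0)=0$, both to kill the boundary term in the integration by parts and to validate $\Ba\theta=\I^\alpha\theta'$.
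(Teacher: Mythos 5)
Your proposal is correct and follows essentially the same route as the paper: test \eqref{sup-2} with $\chi=\theta'$, use $\theta(0)=0$ (from $u_h(0)=\bar R_h u_0$) to write $\Ba\theta=\I^\alpha\theta'$, discard the resulting memory term by the positivity property \eqref{J-P} applied componentwise to $\nabla\theta'$, and close with Cauchy--Schwarz, the stability of $\Pi_h^*$, and the norm equivalence of Lemma \ref{lem: norm equivlence}(ii). The only cosmetic difference is that you absorb via Young's inequality where the paper uses H\"older's inequality; both are equivalent.
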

\begin{proof}
Choose $\chi=\theta'$ in (\ref{sup-2}) to obtain
\[
|\|\theta'|\|^2+A(\Ba\theta,\theta')=
-(\xi',\Pi_h^*\theta').
\]
Since $\theta(0)=0$, $\Ba \theta=\I^\alpha \theta'.$ Using this  and the positivity property  of $\I^\alpha$, \eqref{J-P},
we arrive at
\begin{equation*}
\int_0^tA(\Ba\theta,\theta')\,ds=\int_0^t (\I^\alpha(\nabla\theta'),\nabla\theta')\, ds
\geq 0\,.
\end{equation*}
Hence, by the Cauchy-Schwarz inequality,  the stability property of $\Pi_h^*$, and
also by Lemma \ref{lem: norm equivlence} $(ii)$,
\begin{equation*}
\int_0^t|\|\theta'|\|^2\,ds\le
\int_0^t|(\xi',\Pi_h^*\theta')|\,ds \leq C
\int_{0}^{t} \|\xi'\|\,\|\theta'\|\;ds \leq C \int_{0}^{t} \|\xi'\|\,\||\theta'|\|\;ds.
\end{equation*}
Therefore, a use of  Holder's inequality and again Lemma \ref{lem: norm equivlence} $(ii)$
complete the rest of the proof.  \end{proof}

Below, we derive an upper bound of $\nabla\theta$ in
$L^\infty(L^2)$-norm.
\begin{lemma}\label{sup-conv-1}
Assume that the solution $u$ of \eqref{eq: fractional diffusion} satisfies the regularity property \eqref{eq: regularity 1}.
 With $u_h(0)=\bar R_h u_0,$ there exists a positive constant $C,$ independent of $h,$ such that
\[
\|\nabla\theta(t)\|^2
\leq Ch^2
\int_0^t \| \theta'\|\,ds+Ch^4 |\log(t/{\min\{h^{\frac{2}{\alpha}},t\}})| .\]
\end{lemma}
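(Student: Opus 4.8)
The plan is to test the error equation \eqref{sup-2} with a time-derivative-type quantity that exposes $\nabla\theta$, and then exploit the fractional-integral identity $\I^{1-\alpha}(\Ba\varphi)=\varphi$ from \eqref{identity} to undo the operator $\Ba$. Concretely, since $\theta(0)=0$ we have $\Ba\theta=\I^\alpha\theta'$, so the bilinear term in \eqref{sup-2} reads $A(\I^\alpha\theta',\chi)$. The natural move is to apply the operator $\I^{1-\alpha}$ to the whole identity in time (or equivalently, to integrate against a suitable weight), which converts $A(\I^\alpha\theta',\chi)$ into $A(\theta',\chi)$ up to manageable terms, at the price of turning $\iprod{\theta',\chi}$ into $\iprod{\I^{1-\alpha}\theta',\chi}$ and $-(\xi',\Pi_h^*\chi)$ into $-(\I^{1-\alpha}\xi',\Pi_h^*\chi)$. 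Choosing then $\chi=\theta$ and integrating over $(0,t)$ should produce a clean $\|\nabla\theta(t)\|^2$ from $\int_0^t A(\theta',\theta)\,ds = \tfrac12\|\nabla\theta(t)\|^2$ (using $\theta(0)=0$ and $A(w,w)=\|\nabla w\|^2$), the positive fractional term controlled below by \eqref{J-P} or simply discarded, and a forcing term of the form $\int_0^t |(\I^{1-\alpha}\xi',\Pi_h^*\theta)|\,ds$.

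The key steps, in order: (i) use $\theta(0)=0$ to write $\Ba\theta=\I^\alpha\theta'$ and rewrite \eqref{sup-2}; (ii) apply $\I^{1-\alpha}$ in time and use \eqref{identity} to replace $A(\I^\alpha\theta',\chi)$ by $A(\theta',\chi)$ modulo the remaining $\iprod{\I^{1-\alpha}\theta',\chi}$ term — here one must be careful: integrating the identity $\I^{1-\alpha}A(\I^\alpha\theta',\theta)=A(\theta',\theta)$ against $ds$ gives $\int_0^t A(\theta',\theta)\,ds$ only after moving $\I^{1-\alpha}$ onto $\theta$ via the adjoint $\Jmas$, so I expect to integrate by parts in the fractional sense and land on $\int_0^t (\I^\alpha\nabla\theta',\nabla\theta')\,ds \ge 0$ plus boundary-ish contributions; (iii) bound the $\xi'$-term: $|(\I^{1-\alpha}\xi',\Pi_h^*\theta)| \le C\|\I^{1-\alpha}\xi'\|\,\|\theta\|$ by Lemma \ref{lem: norm equivlence}(iii), and then the decisive estimate is $\|\I^{1-\alpha}\xi'(s)\| \le C h^2 |\log(s/\epsilon)|^{1/2}\cdot(\ldots)$ or rather $\le C h^2 s^{(1-\alpha)+(\alpha-1)} = Ch^2$-type behaviour coming from $\|\xi'(s)\|\le C h \|u'(s)\|_2 \le C h s^{\alpha-1}$ convolved with $\omega_{1-\alpha}(s)\sim s^{-\alpha}$, since $\int_0^s (s-r)^{-\alpha} r^{\alpha-1}\,dr = B(1-\alpha,\alpha)$ is a constant; combined with the sharper bound $\|\xi(s)\|\le Ch^2(1+|\log(s/\epsilon)|)$ established in \eqref{eq: bound of xi}, this is where the $h^4|\log(t/\min\{h^{2/\alpha},t\})|$ term enters; (iv) use the alternative bound $\|\theta\| \le C h\|\nabla\theta\|$? — no, rather split $\|\nabla\theta(t)\|^2 \le$ (forcing) and absorb using that the forcing contains $\int_0^t\|\theta'\|\,ds$ after writing $\|\theta(s)\| = \|\int_0^s\theta'\|$; finally apply a Gronwall-type argument or a direct maximum bound over $[0,t]$ as in the proof of Theorem \ref{H2}.

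The place where real care is needed, and what I expect to be the main obstacle, is step (ii)–(iii): making the fractional integration-by-parts rigorous so that applying $\I^{1-\alpha}$ produces exactly $\int_0^t A(\theta',\theta)\,ds = \tfrac12\|\nabla\theta(t)\|^2$ with only \emph{sign-favourable} leftover terms (one wants $\int_0^t(\I^\alpha\nabla\theta',\nabla\theta')\,ds\ge 0$ via \eqref{J-P}, not a term of indefinite sign), and simultaneously controlling $\int_0^t\|\I^{1-\alpha}\xi'(s)\|\,\|\theta(s)\|\,ds$ by $C h^2\int_0^t\|\theta'\|\,ds + C h^4 |\log(t/\epsilon)|$ via $\|\theta(s)\| \le \int_0^s\|\theta'\|\,dr$, Young's inequality splitting off an $h^4$-weighted $\|\I^{1-\alpha}\xi'\|^2$ contribution, and the logarithmic bound from \eqref{eq: bound of xi} applied to $\xi$ (not $\xi'$). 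The ``clever step'' advertised in the introduction is presumably exactly this rearrangement, so I would treat the commutation of $\I^{1-\alpha}$ with $A$ and $\iprod{\cdot,\cdot}$, and the resulting positivity, as the crux of the argument, with everything downstream being Cauchy–Schwarz, Young, and the elliptic-projection bounds \eqref{rho-estimate}, \eqref{eq: bound of xi} already in hand.
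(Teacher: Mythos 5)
Your opening move is essentially the paper's: test \eqref{sup-2} with the adjoint fractional integral of $\theta'$ (equivalently, apply $\I^{1-\alpha}$ in time and then pair with $\theta'$), so that \eqref{identity} collapses the $A(\Ba\theta,\cdot)$ term to $\int_0^t A(\theta,\theta')\,ds=\tfrac12\|\nabla\theta(t)\|^2$ while leaving the sign-favourable term $\int_0^t\iprod{\I^{1-\alpha}\theta',\theta'}\,ds\ge0$. Two small corrections to your description: after applying $\I^{1-\alpha}$ the bilinear term is $A(\theta,\chi)$, not $A(\theta',\chi)$, so the test function must be $\theta'$ (not $\theta$) for the telescoping to work; and the positive leftover here is the $L^2$-type term above, controlled by \eqref{J-P} --- the gradient term $\int_0^t(\I^\alpha\nabla\theta',\nabla\theta')\,ds$ belongs to Lemma \ref{lem-sup-1}, not this one.

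The genuine gap is in your step (iii). The forcing term is $\int_0^t(\I^{1-\alpha}\xi',\Pi_h^*\theta')\,ds$, paired against $\theta'$, so reaching the stated $Ch^2\int_0^t\|\theta'\|\,ds$ requires $\|\I^{1-\alpha}\xi'(s)\|\le Ch^2$ uniformly. Your estimate $\|\xi'(s)\|\le Ch\|u'(s)\|_2\le Chs^{\alpha-1}$, convolved with $\omega_{1-\alpha}$, yields only $Ch$ (the Beta-function computation is right, but it multiplies $h$, not $h^2$); getting $O(h^2)$ from the elliptic projection instead requires $\|u'\|_3\sim s^{\alpha/2-1}$, whose convolution with $(s-\tau)^{-\alpha}$ behaves like $s^{-\alpha/2}$ and cannot be pulled out of the time integral. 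Being one power of $h$ short degrades the lemma to $\|\nabla\theta\|^2\lesssim h^3$ and loses the $O(h^2)$ maximum-norm result downstream. The missing idea --- the ``clever step'' advertised in the introduction --- is the splitting $\xi=\xi_1+\xi_2$ with $\xi_1=\Pi_h u-u$ the nodal interpolation error and $\xi_2=\bar R_hu-\Pi_hu\in V_h$: by \eqref{1.6}, $\|\xi_1'\|\le Ch^2\|u'\|_2\le Ch^2s^{\alpha-1}$, exactly the singularity that $\omega_{1-\alpha}$ convolves into a constant, so $\|\I^{1-\alpha}\xi_1'\|\le Ch^2$; the $V_h$-valued remainder $\xi_2$ is moved into the discrete inner product and absorbed via the continuity (Cauchy--Schwarz) property of the positive form $\int_0^t\iprod{\I^{1-\alpha}\cdot,\cdot}\,ds$, at the cost of $\tfrac12\int_0^t\iprod{\I^{1-\alpha}\theta',\theta'}\,ds$ (swallowed by the left-hand side) plus $\int_0^t\iprod{\I^{1-\alpha}\xi_2',\xi_2'}\,ds$, which is quadratic in $\xi_2'$ and produces the $h^4|\log(t/\epsilon)|$ term after the $\epsilon$-split with $\epsilon=\min\{h^{2/\alpha},t\}$. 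Your Young's-inequality mechanism based on \eqref{eq: bound of xi} does not substitute for this, since it still leaves a term linear in $\xi'$ paired with $\theta'$ and carrying only one power of $h$.
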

 \begin{proof}
Choose $\chi=\Jmas\theta'$ in (\ref{sup-2}) and integrate
the resulting equation over the interval $(0,t)$. Use
(\ref{identity}) to arrive at
\begin{equation}\label{sup-9}
\int_0^t\iprod{\I^{1-\alpha}\theta',\theta'}\,ds+\int_0^tA(\theta,\theta')\,ds=
-\int_0^t(\xi',\Pi_h^*\Jmas\theta')\,ds.
\end{equation}
To bound  the term on the right hand side of (\ref{sup-9}), we decompose $\xi$ as
$$\xi=(\Pi_h u-u)+(\bar R_h u-\Pi_h u)=:\xi_1+\xi_2.$$ 
Since $\xi_2 \in V_h$ and since $\Pi_h^*$ commutes with $\Jmas$,
\begin{equation*}
\int_0^t(\xi',\Pi_h^*\Jmas\theta')\,ds
= \int_0^t(\I^{1-\alpha}\xi_1',\Pi_h^*\theta')\,ds+\int_0^t\iprod{\I^{1-\alpha}\xi_2',\theta'}\,ds
\end{equation*}
By the continuity property (see Lemma 3.1(iii) in Mustapha \& Sch\"otzau (2014)) of $\I^{1-\alpha}$,
\[\Big|\int_0^t\iprod{\I^{1-\alpha}\xi_2',\theta'}\,ds\Big|\le C_\alpha\int_0^t\iprod{\I^{1-\alpha}\xi_2',\xi_2'}\,ds
+\frac{1}{2}\int_0^t\iprod{\I^{1-\alpha}\theta',\theta'}\,ds\,.\]
Substitute the above equations  in (\ref{sup-9}) and use the equality $\int_0^tA(\theta,\theta')\,ds=\frac{1}{2}\|\nabla \theta(t)\|^2$ (because $\theta(0)=0$) to obtain after simplifying
\begin{equation}
\label{sup-11-1}
\int_0^t\iprod{\I^{1-\alpha}\theta',\theta'}\,ds+\|\nabla\theta(t)\|^2\le 2
\Big|\int_0^t(\I^{1-\alpha}\xi_1',\Pi_h^*\theta')\,ds\Big|+ C_\alpha\int_0^t\iprod{\I^{1-\alpha}\xi_2',\xi_2'}\,ds\,.
\end{equation}
By the stability of $\Pi_h^*,$ it follows that
\begin{equation*}
\begin{aligned}
 2\Big|\int_0^t(\I^{1-\alpha}\xi_1',&\Pi_h^*\theta')\,ds\Big|+ C_\alpha\int_0^t \iprod{\I^{1-\alpha}\xi_2',\xi_2'}\,ds\\
&\leq C\int_0^t\|\I^{1-\alpha}\xi_1'\|\,\|\theta'\|\,ds+ C_\alpha\int_0^t\|\I^{1-\alpha}\xi_2'\|\,\|\xi_2'\|\,ds\\
&\leq  C\int_0^t\int_0^s (s-\tau)^{-\alpha}\Big(\|\xi_1'(\tau)\|\,\|\theta'(s)\|+\|\xi_2'(\tau)\|\,
\|\xi_2'(s)\|\Big)
\,d\tau\,ds.
\end{aligned}
\end{equation*}
Now split $\xi_2$ as $\xi_2= \xi-\xi_1$ and use the  elliptic projection with \eqref{rho-estimate}
and \eqref{1.6} to arrive at
\begin{equation}\label{sup-11-2}
\begin{aligned}
& 2 \Big|\int_0^t(\I^{1-\alpha}\xi_1',\Pi_h^*\theta')\,ds\Big|+ C\int_0^t \iprod{\I^{1-\alpha}\xi_2',\xi_2'}\,ds\\
&\leq  Ch^2\int_0^t\int_0^{s} (s-\tau)^{-\alpha}\|u'(\tau)\|_2\,\|\theta'(s)\|\,d\tau\,ds \\
&+  Ch^3\Big( \int_0^\epsilon \|u'(s)\|_2
+h\int_\epsilon^t \|u'(s)\|_3 \Big) \int_0^{s} (s-\tau)^{-\alpha}\|u'(\tau)\|_3\,\,d\tau\,ds\\
&\leq  Ch^2 \int_0^t  \|\theta'(s)\|\,ds+
Ch^4\Big(h^{-1}\int_0^\epsilon  s^{\frac{\alpha}{2}-1} \,ds+\int_\epsilon^t s^{-1}\,ds\Big)\\
&\leq  Ch^2 \int_0^t  \|\theta'(s)\|\,ds+
Ch^4\;\Big(h^{-1}\;\epsilon^{\frac{\alpha}{2}}+\log \Big(\frac{t}{\epsilon} \Big)\Big)
\,.
\end{aligned}
\end{equation}
where in the second  last step, we have used the regularity assumption \eqref{eq: regularity 1} and
the inequalities:
\begin{align*}
&\int_0^s (s-\tau)^{-\alpha} \tau^{\frac{\alpha}{2}-1}\,d\tau\le
C\,s^{-\frac{\alpha}{2}}\quad{\rm and}\quad
\int_0^s (s-\tau)^{-\alpha} \tau^{\alpha-1}\,d\tau\le C\,.
\end{align*}
Substitute \eqref{sup-11-2} in \eqref{sup-11-1},
 choose $\epsilon={\min\{h^{\frac{2}{\alpha}},t\}}$, and  use the positivity property \eqref{J-P} of the operator $\I^{1-\alpha}$ to complete the rest of the proof.
 \end{proof}

As a consequence of the super-convergent result proved in  Lemma \ref{sup-conv-1}, we prove in
the next theorem,  the following  maximum norm convergence.
\begin{theorem}\label{sup-conv-2}
Let $u_h(0)= \bar R_h u_0.$ Assume that  $u \in H^1(0,T;H^3(\Omega))\cap L^\infty(0,T;W^{2,\infty}(\Omega))$.  Then,
\[
\|(u_h-u)(t)\|_\infty \leq Ch^2|{\rm log} \;h\;|\quad{\rm  for}~~t\in (0,T],\]
where the constant $C$ depends on  $T$  and $\alpha$, but
  independent of $h$\,.
  \end{theorem}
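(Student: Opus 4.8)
The plan is to use the standard splitting $u_h - u = \theta + \xi$ together with the triangle inequality in $L^\infty(\Omega)$, so that it suffices to bound $\|\theta(t)\|_\infty$ and $\|\xi(t)\|_\infty$ separately. The term $\|\xi(t)\|_\infty$ is handled immediately by the maximum-norm projection estimate \eqref{rho-estimate-infty}, which under the assumed regularity $u \in L^\infty(0,T;W^{2,\infty}(\Omega)) \cap H^1(0,T;H^3(\Omega))$ gives $\|\xi(t)\|_\infty \le C h^2 |\log h|$. The real work is therefore to show $\|\theta(t)\|_\infty \le C h^2 |\log h|$. Since $\theta(t) \in V_h$, I would pass from the $L^\infty$-norm to a weaker norm by an inverse estimate of the form $\|\theta(t)\|_\infty \le C |\log h|^{1/2}\,\|\nabla\theta(t)\|$ (the standard two-dimensional discrete Sobolev inequality on quasi-uniform meshes), so that the problem reduces to obtaining a superconvergent bound $\|\nabla\theta(t)\| \le C h^2 |\log h|^{1/2}$.

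To get the gradient bound I would invoke Lemma \ref{sup-conv-1}, which already gives
\[
\|\nabla\theta(t)\|^2 \le C h^2 \int_0^t \|\theta'\|\,ds + C h^4 \,\bigl|\log\bigl(t/\min\{h^{2/\alpha},t\}\bigr)\bigr|,
\]
so the only remaining ingredient is to control $\int_0^t \|\theta'\|\,ds$. By Cauchy--Schwarz this is bounded by $\sqrt{t}\,\bigl(\int_0^t\|\theta'\|^2\,ds\bigr)^{1/2}$, and Lemma \ref{lem-sup-1} then bounds $\int_0^t\|\theta'\|^2\,ds$ by $C\int_0^t\|\xi'\|^2\,ds$. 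Using the elliptic-projection estimate $\|\xi'(s)\| \le C h\|u'(s)\|_2$ (or the $\ell=0$ case $\|\xi'(s)\| \le C h^2\|u'(s)\|_{W^3_p}$ from \eqref{rho-estimate}) together with the regularity assumption $u' \in L^2(0,T;H^3(\Omega)) \subset L^2(0,T;H^2(\Omega))$, one gets $\int_0^t\|\theta'\|^2\,ds \le C h^2$, hence $\int_0^t\|\theta'\|\,ds \le C h$. Feeding this back gives $\|\nabla\theta(t)\|^2 \le C h^3 + C h^4|\log h| \le C h^3$, which only yields $\|\nabla\theta(t)\| \le C h^{3/2}$ — not quite the $h^2$ I want. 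To recover the extra half power I would instead bootstrap: once we know $\|\theta'\|$ is itself $O(h^2)$-ish in an $L^2$-in-time sense, a sharper use of Lemma \ref{sup-conv-1} (or re-running its proof keeping the $\|\theta'\|$ term on the left via a Gronwall-type absorption, using the positivity \eqref{J-P} of $\I^{1-\alpha}$) delivers $\|\nabla\theta(t)\| \le C h^2 |\log h|^{1/2}$; combined with the discrete Sobolev inequality this gives $\|\theta(t)\|_\infty \le C h^2|\log h|$.

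Finally I would assemble $\|(u_h-u)(t)\|_\infty \le \|\theta(t)\|_\infty + \|\xi(t)\|_\infty \le C h^2|\log h|$, with the constant depending on $T$, on $\alpha$ (through $c_\alpha$, $\cos(\alpha\pi/2)$, and the various fractional-integral convolution bounds used in Lemmas \ref{lem-sup-1} and \ref{sup-conv-1}), and on the norms $\|u\|_{H^1(0,T;H^3)}$ and $\|u\|_{L^\infty(0,T;W^{2,\infty})}$, but not on $h$. The step I expect to be the main obstacle is extracting the sharp $h^2$ (rather than $h^{3/2}$) power for $\|\nabla\theta\|$: the naive chaining of Lemma \ref{lem-sup-1} and Lemma \ref{sup-conv-1} loses half a power because of the $h^2\int_0^t\|\theta'\|\,ds$ term, and closing this gap requires either a genuine self-improving (bootstrap/Gronwall) argument on the coupled pair $(\|\nabla\theta\|, \int_0^t\|\theta'\|^2)$ or a more careful treatment of the mixed term $\int_0^t(\I^{1-\alpha}\xi_1',\Pi_h^*\theta')\,ds$ that keeps $\|\theta'\|$ (not $\|\nabla\theta\|$) on the left and exploits the $H^3$-regularity of $u'$ more aggressively — this is presumably the "clever step" alluded to in the introduction.
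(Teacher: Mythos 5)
Your architecture is exactly the paper's: split $u_h-u=\xi+\theta$, bound $\|\xi(t)\|_\infty$ by \eqref{rho-estimate-infty}, pass from $\|\theta(t)\|_\infty$ to $\|\nabla\theta(t)\|$ by the two-dimensional discrete Sobolev inequality, and then chain Lemma \ref{sup-conv-1} with Lemma \ref{lem-sup-1}. But there is a genuine gap at the last step, and the fix is not the bootstrap you propose. You bound $\int_0^t\|\xi'\|^2\,ds$ via $\|\xi'(s)\|\le Ch\|u'(s)\|_2$, which gives $\int_0^t\|\theta'\|^2\,ds\le Ch^2$, hence $\int_0^t\|\theta'\|\,ds\le Ch$ and only $\|\nabla\theta\|\le Ch^{3/2}$. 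The point of the theorem's hypothesis $u\in H^1(0,T;H^3(\Omega))$ --- which is strictly stronger than \eqref{eq: regularity 1}, as the Remark following the theorem emphasizes --- is precisely that it lets you use the $\ell=0$ case of \eqref{rho-estimate}, $\|\xi'(s)\|\le Ch^2\bigl(\|u(s)\|_{W^3_p}+\|u'(s)\|_{W^3_p}\bigr)$, so that $\int_0^t\|\theta'\|^2\,ds\le C\int_0^t\|\xi'\|^2\,ds\le Ch^4$ and $\int_0^t\|\theta'\|\,ds\le Ct^{1/2}h^2$. Substituting this into Lemma \ref{sup-conv-1} gives $\|\nabla\theta(t)\|^2\le Ch^4+Ch^4|\log h|$, i.e.\ $\|\nabla\theta(t)\|\le Ch^2|\log h|^{1/2}$, and the theorem follows with no self-improvement argument at all. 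What you computed with the $O(h)$ estimate is exactly the suboptimal $h^{3/2}$ rate that the paper's Remark records as the consequence of assuming only \eqref{eq: regularity 1}; you have, in effect, proved the Remark rather than the Theorem.

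The proposed repair is also not viable as stated: Lemma \ref{lem-sup-1} bounds $\int_0^t\|\theta'\|^2\,ds$ purely in terms of $\int_0^t\|\xi'\|^2\,ds$, with no $\theta$-dependence on the right-hand side, so there is no coupled system to iterate --- improving $\|\nabla\theta\|$ does not feed back into $\int_0^t\|\theta'\|\,ds$, and a Gronwall/bootstrap loop has nothing to close on. The only route to the extra half power is the sharper projection estimate for $\xi'$, which the hypothesis hands you directly. (A minor point: whether the discrete Sobolev inequality carries $|\log h|^{1/2}$ or $|\log h|$ is immaterial here, since the statement absorbs it into $Ch^2|\log h|$.)
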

  \begin{proof}
From the decomposition $u_h-u=\xi+\theta$ and the estimates of
$\xi$ in \eqref{rho-estimate-infty}, we obtain for $t >0$
\[\|(u_h-u)(t)\|_\infty\le C h^2\;|{\rm log} \;h\;|\;\Big(\|u(t)\|_3+ \|u(t)\|_{W^{2,\infty}(\Omega)} \Big)+\|\theta(t)\|_\infty\,.\]
However, by Sobolev lemma when $\Omega\subset \mathbb{R}^2$ and Lemma \ref{sup-conv-1}, we  arrive for $t>0$ at
\begin{align*}
\|\theta(t)\|_\infty
&\leq C\;|{\rm log} \;h\;|\;\|\nabla\theta(t)\| \leq Ch\;|{\rm log} \;h\;|\;
\int_0^t \| \theta'\|\,ds+ Ch^2 \;| {\rm log}\;(t/{\min\{h^{\frac{2}{\alpha}},t\}})|\,.
\end{align*}
To complete the proof, we use the inequality  $\int_0^t \| \theta'\|\,ds\le  t^{1/2}\Big(\int_0^t \| \theta'\|^2\,ds\Big)^{1/2}$, Lemma \ref{lem-sup-1}, the  bound in \eqref{rho-estimate},
and the regularity assumption $u \in H^1(0,T;H^3(\Omega))$.
\end{proof}
\begin{remark} The  assumption $u\in H^1(0,T;H^3(\Omega))$ is stronger than the one imposed in \eqref{eq: regularity 1}. Noting that, under the regularity assumptions in  \eqref{eq: regularity 1} with $\alpha \in (1/2,1)$, one can show that the error in the $L^\infty(L^\infty)$ is of order $h^{\frac{3}{2}}$ (ignoring the logarithmic factor), that is, suboptimal. To see this, we follow the derivation in the above theorem, and use
\[\int_0^t \| \theta'\|^2\,ds\le C\,\int_0^t \|\xi'\|^2\,ds
\le C\,h^2\int_0^t \| u'\|_2^2\,ds\le C\,h^2\int_0^t s^{2\alpha-2}\,ds \le C\,h^2.\]
However,  our numerical
experiments illustrate that the imposed regularity assumptions are pessimistic. We observe optimal rates of convergence in the absence of the regularity  assumptions \eqref{eq: regularity 1}.
\end{remark}
\section{The fully-discrete numerical scheme}\label{sec: fully discrete numerical method}
\se
This section is devoted to our fully-discrete scheme   for the fractional diffusion model problem \eqref{eq: fractional diffusion}. We discretize in time using a piecewise linear DG  method (Mustapha \& McLean (2011)) and the FV method  for the spatial discretization.  To this end,   we introduce a (possibly non-uniform) partition of the time interval
$[0,T]$ given by the points: $0=t_0<t_1<t_2<\cdots<t_N=T,$ and define the half-open subinterval~$I_n=(t_{n-1},t_n]$ with length
$k_n=t_n-t_{n-1}$ for $1\le n\le N$ and set ~$k:=\max_{1\le n\le
N}k_n$.

Next, we introduce our {\em time-space} finite dimensional   spaces~$\W$ and $\W^*$ as
\begin{align*}
\W&:=\{  v\in L_2((0,T);V_h): v|_{I_n }\in P_1(V_h) , 1\leq n\leq N\},\\
\W^*&:=\{  v\in L_2((0,T);V_h^*): v|_{I_n }\in P_1(V_h^*) , 1\leq n\leq N\},
\end{align*}
where $P_1(S)$  denotes the space of linear polynomials in~$t$  with coefficients in a given space $S.$

The DG FV  approximation $U\in \W$ for (\ref{eq: fractional diffusion}) is now
defined as follows: Given $U(t)$ for $0\le t\le t_{n-1}$ with $U(0) :=U^0=U^0_+ \approx  u_0 $, the
 solution $U\in P_1(V_h)$ on $I_n$
is determined by requiring that for $1\le n\le N$,
\begin{equation}\label{eq: DG In}
(U^{n-1}_+,X^{n-1}_+)+\int_{I_n}\bigl\{(U',X)
    +A_h\bigl(\Ba U,X\bigr)\bigr\}\,dt
    =(U^{n-1},X^{n-1}_+)+\int_{I_n}(f,X)\,dt\quad \forall ~X\in P_1(V_h^*),
\end{equation}
 where
\[
U^n:=U(t_n)=U(t_n^-),\quad U^n_+:=U(t_n^+),\quad [U]^n:=U^n_+-U^n\,.
\]
For our error analysis, we recast the DG FV
method using the global bilinear form:
\begin{equation}\label{eq: GN def}
G(v, w):=\iprod{v^0_+ ,w^0_+}+\sum_{j=1}^{N-1}\iprod{[v]^j,w^j_+}
    +\sum_{j=1}^N\int_{I_j}\,\iprod{v',w}\,dt.
\end{equation}
Integration by parts yields an alternative expression for the bilinear form as
\begin{equation}\label{eq: GN dual}
G(v,w)=\iprod{v^N,w^N}-\sum_{j=1}^{N-1}\iprod{v^j,[w]^j}
    -\sum_{j=1}^N\int_{I_j}\iprod{v,w'}\,dt\,.
\end{equation}
One can see that the local DG FV scheme  \eqref{eq: DG In} holds if and only if $U\in\W$
satisfies
\begin{equation}\label{eq: DG global}
G(U, X)+\int_0^{T} A_h\bigl(\Ba U, \Pi_h^*X\bigr)\,dt=\iprod{U^0 ,X^0_+}+\int_0^{T}\iprod{f, X}\,dt
    \quad\forall ~X\in\W.
\end{equation}
Since the exact solution $u$ of \eqref{eq: fractional diffusion} satisfies \eqref{consistency}
and the identity $[u]^j=0$ for all~$j$, it follows
that
\[
G(u, X)+\int_0^{T} A_h\bigl(\Ba u,\Pi_h^* X\bigr)\,dt=\iprod{u_0,X^0_+}+\int_0^{T}\iprod{f,X}\,dt\,.
\]
Thus, the following  Galerkin orthogonality property follows at once
\begin{equation}\label{eq: GN orthog}
G(U-u,X)+\int_0^{T} A_h\bigl(\Ba (U-u),\Pi_h^* X\bigr)\,dt=\iprod{U^0 -u_0,X^0_+}\quad \forall ~X\in\W.
\end{equation}
\section{Error analysis of the DG FV scheme}\label{sec: error fully discrete numerical method}
\se
To estimate the error from the time-space discretization, we start from the following decomposition:
\begin{equation}\label{eq: U-u=psi+eta+Pi xi}
U-u= (U-\pw \bar R_h u) + (\pw u-u) + \pw(\bar R_h u-u)=:\psi+\eta+\pw\xi,
\end{equation}
where $ \bar R_h $ is the finite volume elliptic projection
operator defined as in (\ref{bar Ritz}), and $\pw : C^0(\overline I_{n};L^2(\Omega)) \to
C^0(\overline I_{n};\p_1(L^2(\Omega))$  is the local (in time) $L^2$-projection defined for $1\le n\le N$
 by
 \begin{equation}\label{eq: Pi properties}
 \pw v(t_n)-v(t_n)=0~~{\rm and}~~\int_{I_n} (\pw v-v,w)\,dt=0~~\forall~w \in L_2(\Omega).
 \end{equation}
The  projection $\pw$  satisfies the error property (Equation 25, Mustapha \& McLean (2011)):
\begin{equation}\label{projection error}
\|\eta\|_{I_n}\le 2k_n\int_{I_n}\|u''(t)\|\,dt\;\;\;\mbox {for}\;\; 1\le n\le N,
\end{equation}
and also the following  error bound property which involves the fractional derivative operator $\Ba $:
\begin{equation}\label{estimate of fractional eta}
\Big|\int_0^T A(\Ba  \eta,\Pi_h^* X) \,dt\Big| \leq
    C\,\|X\|_{J}\,{\bf E},
 \end{equation}
where
\begin{equation} \label{eq: E}
 {\bf E}:= k_1^{\alpha}\int_{I_1} \| Au'\|\,dt+\sum_{n=2}^N k_n^{1+\alpha}\int_{I_n} \| Au''\|\,dt\,.\end{equation}
 Noting that, we used in \eqref{estimate of fractional eta} the following notations
\[
  \|v\|_J:=\max_{n=1}^N \|v\|_{I_n}\quad {\rm with}\quad \|v\|_{I_n}:=\sup_{t\in I_n}\|v(t)\|\,.
\]
 For the proof of \eqref{estimate of fractional eta}, we refer to (Lemma 2, Mustapha \& McLean (2011))
 in addition, to the use of the stability property of $\Pi^*_h$ in Lemma \ref{lem: norm equivlence} (iii).

Next, we estimate  $\psi$. In our proof, we use the following spatial discrete
analogue of \eqref{positive definite property of Ba}:
\begin{equation}\label{eq: discrete positivity}
\int_0^{T} A_h\bigl(\Ba  \chi,\Pi_h^* \chi \bigr)\,dt =
\int_0^{T} A \bigl(\Ba  \chi,\chi \bigr)\,dt \ge c_\alpha T^{\alpha-1}\int_0^{T}\|\nabla \chi(t)\|^2\,dt\quad \forall~~\chi \in \W,\end{equation}
which follows from the identity \eqref{sup-1} and the coercivity property in \eqref{positive definite property of Ba}.
\begin{lemma}\label{lem: ||psi||^2}
Given $\psi= U-\pw \bar R_h u,$ the following estimate holds
\[
\|\psi\|_{J} \le  C\|U^0-\bar R_h u_0\|+\int_0^T\|\xi'(t)\|\,dt+
C\,{\bf E},
\]
where ${\bf E}$ is defined in \eqref{eq: E}.
\end{lemma}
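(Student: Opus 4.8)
The plan is to exploit the Galerkin orthogonality \eqref{eq: GN orthog} together with the decomposition \eqref{eq: U-u=psi+eta+Pi xi}. Writing $U-u=\psi+\eta+\pw\xi$ and inserting $X=\psi\in\W$ into \eqref{eq: GN orthog}, I would move the $\eta$ and $\pw\xi$ contributions to the right-hand side, obtaining an identity of the form
\[
G(\psi,\psi)+\int_0^T A_h\bigl(\Ba\psi,\Pi_h^*\psi\bigr)\,dt
= \iprod{U^0-u_0,\psi^0_+}-G(\eta+\pw\xi,\psi)-\int_0^T A_h\bigl(\Ba(\eta+\pw\xi),\Pi_h^*\psi\bigr)\,dt.
\]
The left-hand side is where the coercivity enters: for the $G$-term one uses the standard DG identity
\[
G(\psi,\psi)=\tfrac12\|\psi^N\|_{*}^2+\tfrac12\sum_{j=1}^{N-1}\|[\psi]^j\|_*^2+\tfrac12\|\psi^0_+\|_*^2\ge 0
\]
(in the $\iprod{\cdot,\cdot}$ inner product), and for the fractional term one uses the discrete positivity \eqref{eq: discrete positivity}, which gives a nonnegative (indeed $\ge c_\alpha T^{\alpha-1}\int_0^T\|\nabla\psi\|^2$) contribution. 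So the left-hand side controls $\|\psi^0_+\|_*^2$ and all the jumps; to get $\|\psi\|_J$ one then runs the usual "restrict to $[0,t_n]$" argument (replace $T$ by $t_n$, use the alternative form \eqref{eq: GN dual} on the truncated sum) so that $\|\psi^n\|_*$ appears on the left for each $n$, and finally invoke the norm equivalence Lemma \ref{lem: norm equivlence}(ii) to pass between $\|\cdot\|_*=|\|\cdot|\|$ and $\|\cdot\|$.

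The right-hand side splits into three pieces to be bounded by $C\|U^0-\bar R_h u_0\|$, $C\int_0^T\|\xi'\|\,dt$, and $C\mathbf{E}$ respectively. The initial term $\iprod{U^0-u_0,\psi^0_+}$: writing $U^0-u_0=(U^0-\bar R_h u_0)+(\bar R_h u_0-u_0)=(U^0-\bar R_h u_0)-\xi(0)$ and using Cauchy–Schwarz plus stability of $\Pi_h^*$, this is absorbed into the left-hand $\|\psi^0_+\|$ term after a weighted Young's inequality, leaving $C\|U^0-\bar R_h u_0\|^2+C\|\xi(0)\|^2$; note $\|\xi(0)\|\le\int$-type bounds or is folded into $\int_0^T\|\xi'\|$. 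For the $\eta$ contributions: $G(\eta,\psi)$ — since $\pw$ reproduces values at the nodes $t_n$ by \eqref{eq: Pi properties}, we have $\eta(t_n)=0$, so using \eqref{eq: GN dual} the boundary and jump terms involving $\eta$ drop out and $\int_{I_j}\iprod{\eta,\psi'}\,dt$ vanishes too because $\psi'|_{I_j}$ is piecewise constant-in-time-valued and $\eta$ is $L^2$-orthogonal to constants on each $I_n$; hence $G(\eta,\psi)=0$. The fractional $\eta$-term is handled directly by \eqref{estimate of fractional eta}, giving $C\|\psi\|_J\,\mathbf{E}$. For the $\pw\xi$ contributions: $\pw\xi\in\W$ so $\bar R_h$-orthogonality \eqref{bar Ritz} makes the $A_h(\Ba\pw\xi,\Pi_h^*\psi)$ term vanish — wait, more carefully, $A_h(\bar R_h v,\chi)=A_h(v,\chi)$ only relates $\bar R_h u$ to $u$, so instead I bound this term using stability of $\Pi_h^*$, the continuity of $\Ba=\I^{1-\alpha}\partial_t$ on each interval (after integration by parts in time, using $\pw\xi(t_n)=\xi(t_n)$), and the identity $(\pw\xi)'=\pw(\xi')$ on each $I_n$, reducing it to $\int_0^T\|\xi'\|\,dt$ times $\|\psi\|_J$; the $G(\pw\xi,\psi)$ term is treated by the same DG manipulations, $\int_{I_j}\iprod{(\pw\xi)',\psi}\,dt$, and bounded by $C\int_0^T\|\xi'\|\,dt\,\|\psi\|_J$ after using \eqref{eq: Pi properties} to rewrite derivatives and cancel jump terms.

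Collecting everything, one arrives at
\[
\|\psi\|_J^2 \le C\|U^0-\bar R_h u_0\|^2 + C\|\psi\|_J\Bigl(\int_0^T\|\xi'\|\,dt+\mathbf{E}\Bigr),
\]
and dividing through by $\|\psi\|_J$ (or a Young's inequality on the cross term) yields the claimed bound. The main obstacle I expect is the careful bookkeeping of the DG jump/boundary terms when $X=\psi$ rather than a "nice" test function: one must be sure the $\eta$-terms in $G(\cdot,\cdot)$ genuinely cancel using both properties in \eqref{eq: Pi properties}, and that the truncation-to-$[0,t_n]$ argument is compatible with the global form \eqref{eq: GN orthog}; additionally, keeping the $\Pi_h^*$ vs. no-$\Pi_h^*$ distinction straight in the $A_h$ versus $A$ terms (via \eqref{sup-1} and \eqref{eq: discrete positivity}) requires care, as does ensuring that the fractional-derivative cross terms with $\pw\xi$ are estimated using only the continuity/positivity properties \eqref{J-P} and the continuity of $\I^{1-\alpha}$ already quoted, without needing extra regularity of $\xi$ in time beyond $\xi'\in L^1(L^2)$.
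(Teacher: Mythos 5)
Your overall skeleton matches the paper's proof: test the Galerkin orthogonality \eqref{eq: GN orthog} with $X=\psi$, discard $\int_0^T A(\Ba\psi,\psi)\,dt\ge 0$ by \eqref{positive definite property of Ba}, use the DG identity for $G(\psi,\psi)$ to control $|\|\psi|\|_J$, kill the $\eta$-contributions to $G$ via \eqref{eq: Pi properties}, reduce the $\pw\xi$-contribution to $\iprod{\xi^0,\psi^0_+}+\int_0^T\iprod{\xi',\psi}\,dt$, and invoke \eqref{estimate of fractional eta} for the $\eta$-part of the fractional term. (Your ``restrict to $[0,t_n]$'' remark is, if anything, a more careful route to $|\|\psi|\|_J$ than the paper's one-line bound $G(\psi,\psi)\ge\tfrac18|\|\psi|\|_J^2$.)

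There is, however, one genuine gap, and it sits exactly where you talked yourself out of the right idea. The term $\int_0^T A_h\bigl(\Ba\,\pw\xi,\Pi_h^*\psi\bigr)\,dt$ \emph{does} vanish identically: by \eqref{bar Ritz}, $A_h(\xi(s),\chi)=A_h(\bar R_h u(s)-u(s),\chi)=0$ for every $s$ and every $\chi\in V_h^*$, and since $\pw$ and $\Ba$ act only in the time variable while $A_h(\cdot,\chi)$ is linear in its first argument, $A_h(\Ba\pw\xi(t),\chi)=0$ as well. This is precisely how the paper proceeds: it shows $A_h(\Ba\pw\bar R_h u,\Pi_h^*X)=A_h(\Ba\pw u,\Pi_h^*X)=(\Ba\cL\pw u,\Pi_h^*X)$, so the entire stiffness error collapses to $\int_0^T(\Ba\cL\eta,\Pi_h^*X)\,dt$, which is exactly what \eqref{estimate of fractional eta} bounds by $C\|X\|_J\,{\bf E}$. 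Your substitute---bounding $\int_0^T A_h(\Ba\pw\xi,\Pi_h^*\psi)\,dt$ by $C\int_0^T\|\xi'\|\,dt\;\|\psi\|_J$ via stability of $\Pi_h^*$ and continuity of $\Ba$---cannot work: by \eqref{sup-1}, $A_h(\cdot,\Pi_h^*\cdot)=A(\cdot,\cdot)$ is a gradient (energy) pairing, not an $L^2$ pairing, so it is not controlled by $L^2$ norms of its arguments; you would need either $\|\nabla\psi\|$ (an inverse estimate costing $h^{-1}$) or $H^1$/$H^2$ norms of $\xi$, which would destroy the $O(h^2)$ rate. A second, smaller slip: $\|\xi(0)\|$ is not ``folded into'' $\int_0^T\|\xi'\|\,dt$; rather, the boundary term $\iprod{\xi^0,\psi^0_+}$ produced by $G(\pw\xi,\psi)$ cancels exactly against the $\xi(0)$ piece of $\iprod{U^0-u_0,\psi^0_+}$, leaving precisely $\iprod{U^0-\bar R_h u_0,\psi^0_+}$; without observing this cancellation the stated bound, which involves only $\|U^0-\bar R_h u_0\|$, does not follow.
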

\begin{proof}
The Galerkin orthogonality property~\eqref{eq: GN orthog} along with  the decomposition
\eqref{eq: U-u=psi+eta+Pi xi} and
the identity  \eqref{sup-1} implies that
\begin{multline}\label{eq: orthog step 1}
G(\psi, X)+\int_0^{T} A_h\bigl(\Ba  \psi,\Pi_h^* X\bigr)\,dt
=\iprod{U^0 -u_0,X^0_+}\\
-G(\pw \bar R_h u-u, X)
-\int_0^{T} A \bigl(\Ba  (\pw \bar R_h u-u),X\bigr)\,dt\quad\forall~X\in\W\,.
\end{multline}
Since $\int_{I_n}\iprod{\pw \bar R_h u-\bar R_h u,X'}\,dt=0$ by~\eqref{eq: Pi properties},
integration by parts yields
\begin{align*}
\int_{I_n}\iprod{\pw \bar R_h u-u,X'}\,dt&
    =\int_{I_n}\iprod{\xi,X'}\,dt
    =\iprod{\xi^n,X^n}-\iprod{\xi^{n-1},X^{n-1}_+}
    -\int_{I_n}\iprod{\xi',X}\,dt\,.
\end{align*}
 Hence,  by the alternative formulation  of $G$ in \eqref{eq: GN dual} with $(\pw \bar R_h u-u)(t_n)=\xi^n$
 (by the definition of $\pw$),
\begin{align*}
G(\pw \bar R_h u-u, X)
  &  =
    \iprod{\xi^N,X^N}-\sum_{n=1}^{N-1}\iprod{\xi^n,[X]^n}
    -\sum_{n=1}^N\int_{I_n}\iprod{\pw \bar R_h u-u,X'}\,dt
    \\
 &   =
    \sum_{n=1}^{N}\iprod{\xi^n,X^n}-\sum_{n=2}^{N}\iprod{\xi^{n-1},X^{n-1}_+}
    -\sum_{n=1}^N\int_{I_n}\iprod{\pw \bar R_h u-u,X'}\,dt
    \\
&=
\iprod{\xi^0,X^0_+}
    +\sum_{n=1}^N \int_{I_n}\iprod{\xi',X}\,dt.
\end{align*}
On the other hand, by   \eqref{sup-1}, the following explicit representation of   $\pw \bar R_h u:$
 \[\pw \bar R_h u(t) =\bar R_h u(t_j)+\Big(\bar R_h u(t_j)-k_j^{-1}\int_{t_{j-1}}^{t_j} \bar R_h u(s)\,ds\Big)\frac{2}{k_j}(t-t_j)\quad{\rm for}~~t\in I_j,\]
 the definition of the projection $\bar R_h u$, and  the identity  \eqref{eq: h2 identity}, we notice that
\[
A\bigl(\Ba  \pw \bar R_h u,\Pi_h^* X\bigr)
=A_h\bigl(\Ba  \pw \bar R_h u,\Pi_h^* X\bigr)    =A_h\bigl(\Ba \pw u,\Pi_h^* X\bigr)=(\Ba  \cL\pw u,\Pi_h^* X)\quad {\rm on}~~I_n\,.
\]
Substitute the above contribution  in \eqref{eq: orthog step 1} and use again  the identity \eqref{sup-1} to  obtain
\[
G(\psi,X)+\int_0^{T} A \bigl(\Ba  \psi, X\bigr)\,dt=\iprod{U^0 -\bar R_h u_0,X^0_+}
    -\sum_{j=1}^N \int_{I_j} \bigl(\iprod{\xi',X}+(\Ba  \cL\eta,\Pi_h^* X)\bigr)\,dt.
\]
To proceed in our proof, we choose   $X=\psi$, and use the positivity property of $\Ba$ in \eqref{positive definite property of Ba}) to arrive at
\[
G(\psi,\psi)\le \iprod{U^0 -\bar R_h u_0,\psi^0_+}
    -\sum_{j=1}^N \int_{I_j} \bigl(\iprod{\xi',\psi}+(\Ba  \cL\eta,\Pi_h^* \psi)\bigr)\,dt.
\]
However, by the definition of $G$ given in \eqref{eq: GN def},
\begin{align*}
G(\psi, \psi)&=|\|\psi^0_+\||^2+\sum_{n=1}^{N-1}\iprod{[\psi]^n,\psi^n_+}
    +\frac{1}{2}\sum_{n=1}^N[|\|\psi^n\||^2-|\|\psi^{n-1}_+\||^2]\\
    &=\frac{1}{2}\Big(\sum_{n=1}^{N-1}|\|[\psi]^n\||^2
    +|\|\psi^N\||^2+|\|\psi^0_+\||^2\Big)
    \ge \frac{1}{8}|\|\psi\||_J^2,
\end{align*}
where in the last step we used the fact that for $1\le n\le N$, $\|\psi\|_{I_n}=\max\{\|\psi^n\|,\|\psi^{n-1}_+\|\}$ and the following inequality $\|\psi^{n-1}_+\|^2=\|\psi^{n-1}+[\psi]^{n-1}\|^2\le 2\|\psi^{n-1}\|+2\|[\psi]^{n-1}\|^2\,.$ Therefore,
\[
|\|\psi|\|^2_{J} \le C|\iprod{U^0 -\bar R_h u_0,\psi^0_+}|+C\Big|\int_0^T \bigl[(\xi',\Pi_h^* \psi)+(\Ba  \cL\eta,\Pi_h^* \psi)\bigr]\,dt\Big|\,.
\]
By the  stability property of $\Pi_h^*$ and the equivalence of the norms $\|\cdot\|$ and $|\|\cdot\||$ on  $ V_h$, Lemma \ref{lem: norm equivlence},
\[
\|\psi\|^2_{J} \le  C
    \|\psi\|_{J}\Big(\|U^0 -\bar R_h u_0\|+\int_0^T\|\xi'\|\,dt\Big)+C
    \Big|\int_0^T(\Ba  \cL\eta,\Pi_h^* \psi)\,dt\Big|,
   \]
and finally, the desired result follows after substituting the estimate \eqref{estimate of fractional eta} in the above inequality.
\end{proof}

In the next theorem, we prove  the main convergence results of the fully-discrete scheme. Following Mustapha \& McLean (2011, 2013), we employ time graded meshes based on concentrating the time-steps near $t=0$ to compensate the singular behavior of $u$ of problem \eqref{eq: fractional diffusion}.

To this end,  for a chosen mesh grading
parameter~$\gamma\ge1$, we assume that
\begin{equation}\label{eq: tn standard}
t_n=(n/N)^\gamma T\quad\text{for $0\le n \le N$.}
\end{equation}
It is clear that
for $2\le n\le N$, this time mesh has the following properties:
\begin{equation}\label{eq:kn mesh}\gamma 2^{1-\gamma}k\, {t_{n}}^{1-\frac {1}{\gamma}}\leq {k_{n}}\leq
\gamma\,k\, {t_{n}}^{1-\frac {1}{\gamma}} ~~{\rm and}~~t_{n}\leq 2^\gamma {t_{n-1}}.
\end{equation}
Under suitable regularity assumptions on the solution $u$, we achieve a convergence rate of order $h^2+k^{1+\alpha}$ in the $L^\infty(L^2)$-norm, that is, optimal in space but suboptimal in time. However, the numerical results demonstrate optimal rates of convergence in both variables, in the stronger $L^\infty(L^\infty)$-norm.
\begin{theorem}\label{thm: fully discrete}
Assume that  the solution $u$ of ~\eqref{eq: fractional diffusion}  has the regularity properties \eqref{eq: regularity 2}
 in addition to following property
\begin{equation}\label{eq: regularity u}
\|u'(t)\|+t\|u''(t)\|+t^{\alpha}\|Au'(t)\|+t^{1+\alpha}\|Au''(t)\|\le Mt^{\sigma-1}\quad\text{for $0<t\le T$}
\end{equation}
for some $M,\,\sigma>0$. Assume that the initial data  $u_0 \in H^1(\Omega)$.  Let $U$ be the DG FV solution defined by \eqref{eq: DG In} with $U^0=\bar R_h u_0$.
Then, there is a positive constant $C$,
  depends on  $\alpha$, $T$, $M$, $\sigma$ and $\gamma$, such that
\[
\|U-u\|_{J}\le Ch^2\left(1+ |{\rm log}\; h|\right)+Ck^{1+\alpha}\quad {\rm for}~~\gamma>(1+\alpha)/\sigma\,.\]
\end{theorem}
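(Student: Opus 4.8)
The plan is to bound the three pieces of the error decomposition $U-u=\psi+\eta+\pw\xi$ from \eqref{eq: U-u=psi+eta+Pi xi} in the $\|\cdot\|_J$-norm, and then convert the resulting $h$- and $k$-powers into the claimed rate using the regularity hypotheses \eqref{eq: regularity 2} and \eqref{eq: regularity u} together with the graded-mesh properties \eqref{eq:kn mesh}. First I would handle $\pw\xi$: since $\pw$ is a local $L^2$-projection in time it is stable, so $\|\pw\xi\|_J\le C\|\xi\|_J$, and $\|\xi\|_J$ is controlled exactly as in the discussion following Theorem~\ref{H2} — writing $\|\xi(t)\|\le\|\xi(0)\|+\int_0^t\|\xi'(s)\|\,ds$, splitting the integral at $\epsilon=\min\{h^{2/\alpha},t\}$, and using $\|\xi'(t)\|\le Ch\|u'(t)\|_2$ on $(0,\epsilon)$ and $\|\xi'(t)\|\le Ch^2\|u'(t)\|_3$ on $(\epsilon,t)$ with \eqref{eq: regularity 2}. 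This gives $\|\pw\xi\|_J\le Ch^2(1+|\log h|)$.

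Next I would bound $\eta=\pw u-u$. From \eqref{projection error}, $\|\eta\|_{I_n}\le 2k_n\int_{I_n}\|u''(t)\|\,dt$; on $I_1$ I would instead use the cruder bound $\|\eta\|_{I_1}\le C\int_{I_1}\|u'(t)\|\,dt$ (since $u''$ need not be integrable near $0$), and then invoke \eqref{eq: regularity u} ($\|u'(t)\|\le Mt^{\sigma-1}$, $\|u''(t)\|\le Mt^{\sigma-2}$) together with the graded mesh. On $I_1$, $t_1=(T/N^\gamma)$ and $\int_{I_1}t^{\sigma-1}\,dt\sim t_1^\sigma=(T/N^\gamma)^\sigma$; for $n\ge2$, $k_n\le\gamma k\,t_n^{1-1/\gamma}$ and $\int_{I_n}t^{\sigma-2}\,dt\le Ck_n t_{n-1}^{\sigma-2}$, so that $k_n\int_{I_n}\|u''\|\,dt\le Ck^2 t_n^{2-2/\gamma+\sigma-2}=Ck^2t_n^{\sigma-2/\gamma}$; taking the max over $n$ (and distinguishing $\sigma\gamma\gtrless2$) and using $N^{-1}\sim k^{1/\gamma}$ when needed, the condition $\gamma>(1+\alpha)/\sigma$ is exactly what makes this $O(k^{1+\alpha})$. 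This is the step where the interplay between $\gamma$, $\sigma$ and the singularity exponents must be tracked carefully, and I expect it to be the main technical obstacle, though it is standard graded-mesh bookkeeping following Mustapha \& McLean.

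Finally, for $\psi$ I would invoke Lemma~\ref{lem: ||psi||^2} directly: $\|\psi\|_J\le C\|U^0-\bar R_h u_0\|+\int_0^T\|\xi'(t)\|\,dt+C\mathbf{E}$. Since $U^0=\bar R_h u_0$ the first term vanishes; the second term $\int_0^T\|\xi'\|\,dt$ is exactly the quantity already estimated by $Ch^2(1+|\log h|)$ above; and $\mathbf{E}=k_1^\alpha\int_{I_1}\|Au'\|\,dt+\sum_{n=2}^N k_n^{1+\alpha}\int_{I_n}\|Au''\|\,dt$ is bounded using \eqref{eq: regularity u} ($\|Au'(t)\|\le Mt^{\sigma-1-\alpha}$, $\|Au''(t)\|\le Mt^{\sigma-2-\alpha}$) and \eqref{eq:kn mesh}: on $I_1$, $k_1^\alpha\int_{I_1}t^{\sigma-1-\alpha}\,dt\sim t_1^{\sigma}=(T N^{-\gamma})^\sigma\le Ck^{1+\alpha}$ when $\gamma\sigma>1+\alpha$; for $n\ge2$, $k_n^{1+\alpha}\int_{I_n}t^{\sigma-2-\alpha}\,dt\le Ck^{1+\alpha}t_n^{(1+\alpha)(1-1/\gamma)+\sigma-1-\alpha}=Ck^{1+\alpha}t_n^{\sigma-(1+\alpha)/\gamma}$, whose max over $n$ is bounded (uniformly in $T$) precisely under $\gamma>(1+\alpha)/\sigma$. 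Collecting the three contributions and using $\|U-u\|_J\le\|\psi\|_J+\|\eta\|_J+\|\pw\xi\|_J$ yields $\|U-u\|_J\le Ch^2(1+|\log h|)+Ck^{1+\alpha}$, which is the assertion.
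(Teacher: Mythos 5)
Your proposal is correct and follows essentially the same route as the paper: the decomposition $U-u=\psi+\eta+\pw\xi$, stability of $\pw$ plus the bound \eqref{eq: bound of xi} for $\pw\xi$, the crude bound $\int_{I_1}\|u'\|\,dt$ on the first interval together with \eqref{projection error} and \eqref{eq:kn mesh} for $\eta$, and Lemma~\ref{lem: ||psi||^2} with $U^0=\bar R_h u_0$ and the estimate of $\mathbf{E}$ for $\psi$. One small slip: the relation is $N^{-1}\sim k$ (so $t_1\sim k^\gamma$ and $t_1^\sigma\le Ck^{\gamma\sigma}\le Ck^{1+\alpha}$ for $\gamma\sigma>1+\alpha$), not $N^{-1}\sim k^{1/\gamma}$, but your displayed computations already use the correct relation.
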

\begin{proof}
Using the
decomposition~\eqref{eq: U-u=psi+eta+Pi xi},  the stability property of the time projection $\pw$:
$\|\pw \xi\|_{J}\le 3\|\xi\|_{J}$ (see Equation 26, Mustapha \& McLean (2011)),  the inequality  $\|\xi\|_J \le \|\xi_0\|+\int_0^T \|\xi'(t)\|\,dt$,
the achieved estimate
of $\psi$ in Lemma \ref{lem: ||psi||^2}, and the bound in \eqref{eq: bound of xi}, we observe that
\begin{align*}
\|U-u\|_{J} &\le  \|\psi\|_{J}+\|\eta\|_{J}+3\|\xi\|_{J}\\
&\le
 \|\eta\|_{J}+C\Big(\|\xi_0\|+\int_0^T\|\xi'(t)\|\,dt+
{\bf E})\\
&\le \|\eta\|_{J}+C\, h^2\left(1+ |{\rm log}\; h|\right)+
C{\bf E},
\end{align*}
where ${\bf E}$  is defined in \eqref{eq: E}. From  the bound of $\eta$ in \eqref{projection error}, the regularity assumption~\eqref{eq: regularity u}, the time mesh property  \eqref{eq:kn mesh}, and the corresponding  grading mesh assumption $\gamma>(1+\alpha)/\sigma$,
\begin{multline*}
\|\eta\|_J\le \int_{I_1}\|u'(t)\|\,dt+C\max_{n=1}^N k_n\int_{I_n}\|u''(t)\|\,dt\le C\, t_1^\sigma+ C\max_{n=1}^N k_n^2t_n^{\sigma-2}\\
\le C\, k^{\gamma\sigma}+ C\max_{n=1}^N k_n^{1+\alpha}t_n^{\sigma-(1+\alpha)}
    \le C\, k^{\gamma\sigma}+ Ck^{1+\alpha}\max_{n=1}^N  t_n^{\sigma-(1+\alpha)/\gamma}\le
 C\,   k^{1+\alpha}.
\end{multline*}
In a similar fashion, we estimate ${\bf E}$ as
\begin{align*}
{\bf E}&\le C\int_{I_1}t^{\sigma-1}\,dt
    +C\sum_{j=2}^N  k^{1+\alpha}\int_{I_j}t^{(1-1/\gamma)(1+\alpha)}
        \|Au''(t)\|\,dt\\
    &\le Ct_1^\sigma+ Ck^{1+\alpha}\sum_{j=2}^N \int_{I_j}t^{\sigma-(1+\alpha)/\gamma-1}\,dt\\
    &\le Ck^{\gamma\sigma}+Ck^{1+\alpha}\int_{t_1}^{T}t^{\sigma-(1+\alpha)/\gamma-1}\,dt
    \le C    k^{1+\alpha}\quad {\rm for}~~\gamma>(1+\alpha)/\sigma\,.
\end{align*}
Therefore, to complete the proof, we combine the above estimates.
\end{proof}

\section{Numerical Experiments}\label{sec: numerical experiment}
\se
 \begin{figure}
 \begin{center}
 \includegraphics*[width=7.0cm,height=5.0cm]{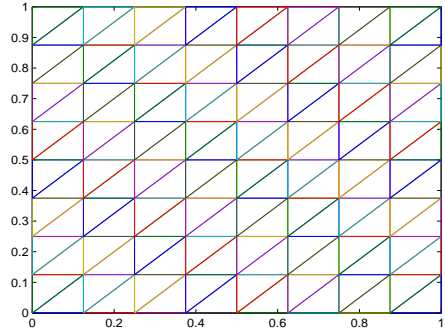}
    \caption{The FV element mesh   $\cT_h$ with $M=8$.}
      \label{Fig: spatial mesh}
   \end{center}
\end{figure}
In this section, we present some numerical tests  to
 validate our theoretical predictions from the fully-discrete DG FV scheme \eqref{eq: DG global}. We actually demonstrate that our achieved  convergence rates are pessimistic and also the imposed regularity assumptions in our error analysis are not sharp. More precisely, for a sufficiently graded time meshes of the form \eqref{eq: tn standard},  our  tests reveal optimal convergence rates in both time and space (that is, of order $O(k^2+h^2)$  in the stronger $L^\infty(L^\infty)$-norm under weaker regularity conditions. However, Theorem \ref{thm: fully discrete} suggest $O(k^{\alpha+1}+h^2)$ rates of convergence  in the $L^\infty(L^2)$-norm.

In our test example,  we consider  $\Omega=(0,1)\times (0,1)$ and   $[0, T ] = [0, 1]$ in the  fractional diffusion problem \eqref{eq: fractional diffusion}.   We choose the initial data $u_0$ and the source term $f$ such that the exact solution $u(x,t)=t^{\alpha}\sin(\pi x)\sin(\pi y),$
and therefore our regularity assumptions \eqref{eq: regularity 2} and \eqref{eq: regularity u} hold for~$\sigma=\alpha$.

 We employ  time meshes of the form \eqref{eq: tn standard} for various choices of the mesh grading parameter $\gamma \ge 1$.
Let $\cT_h$ be a family of uniform (right-angle)  triangular mesh  of the domain  $\Omega$ with diameter ~$h=\sqrt{2}/M$, see Figure \ref{Fig: spatial mesh}. For measuring the error in our numerical solution, we introduce a time finer grid
 \[
\Grid_{N,m}=\{\,t_{j-1}+qk_j:1\le j\le N, 0\le q\le m\,\}
\]
  and let ${\mathcal N}_h$ be the set of all triangular nodes of the mesh family  $\cT_{h_s}$ where the diameter $h_s$ is half the diameter of the finest mesh $\cT_h$ in  our spatial iterations, for instance, $h_s=\sqrt{2}/320$ in Table \ref{table: spatial error for alpha= 0.4 0.75}. Define the following discrete-time-space maximum norm
\[
|\|v\||_{d,m}:=\max\{|v({\bf x},t)|,~({\bf x},t)\in {\mathcal N}_h \times \Grid_{N,m}\}\,.
\]
Thus, for large values of $N, M$ and $m$,  $|\|U-u\||_{d,m}$
approximates the error measured in~$L^\infty(L^\infty)$.

To demonstrate the convergence rates from the spatial discretization by FV method, we  refine the time steps so that the FV errors  are dominant.
 This is achieved by fixing the ratio $\frac{k^{1+\alpha}}{h^2}$ to a given number $<1$. Hence, ignoring the logarithmic term, by  Theorem \ref{thm: fully discrete},  an error of order $O(h^2)$  in the $L^2$-norm is expected. Noting here, for the semi discrete FV scheme \eqref{semi}, we proved in Theorem \ref{sup-conv-2} an $O(h^2)$ rate of convergence  in the stronger  $L^\infty(L^\infty)$-norm under the assumption that the solution $u$ of \eqref{eq: fractional diffusion} is in $H^1(H^3)\cap L^\infty(W^{2,\infty})$. Indeed, this assumption holds  for $\alpha>1/2$ in the current example. However, the numerical results in Table \ref{table: spatial error for alpha= 0.4 0.75} illustrate optimal $O(h^2)$ rates of convergence  for both $\alpha<1/2$ and $\alpha>1/2.$   So, the imposed regularity assumptions   may not be practically required.

To illustrate the convergence rates from time discretization by piecewise linear DG method, we refine the spatial  FV meshes so that the time-stepping error dominates the spatial error.   For $\alpha =0.6$, we observe from Table \ref{Time error for alpha=0.6} convergence rates of order $O(k^{\min\{2,\alpha\gamma\}})$ which is optimal for $\gamma\ge 2/\alpha$. However, a suboptimal convergence of order $O(k^{1+\alpha})$ is proved in  Theorem \ref{thm: fully discrete} assuming that $\gamma>(1+\alpha)/\alpha$.
\begin{table}
\renewcommand{\arraystretch}{0.9}
\begin{center}
\begin{tabular}{|r|rr|rr|}
\hline $M$&\multicolumn{2}{c|}{$\alpha=0.4$}
&\multicolumn{2}{c|}{$\alpha=0.75$} \\
\hline
 10  &3.6522e-02&        &3.1396e-02&       \\
 20  &9.6096e-03&  1.9262&8.2601e-03& 1.9263\\
 40  &2.4235e-03&  1.9873&2.0831e-03& 1.9874\\
 80  &5.8239e-04&  2.0570&5.0059e-04& 2.0570\\
120  &2.4903e-04&  2.0953&2.3145e-04& 1.9026\\
160  &1.3923e-04&  2.0211&1.2713e-04& 2.0827\\
 \hline
\end{tabular}
\caption{The error $|\|U-u\||_{d,10}$ and the spatial rates of convergence for  different choices of $\alpha$.}
\label{table: spatial error for alpha= 0.4 0.75}
\end{center}
\end{table}

\begin{figure}
 \begin{center}
 \includegraphics*[width=9.0cm,height=5.0cm]{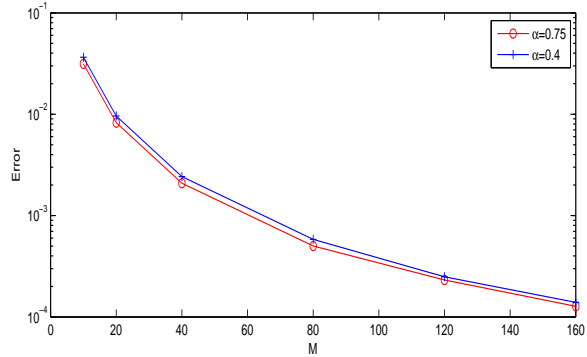}
    \caption{The FV error   for   $\alpha=0.4$ and $\alpha=0.75$.}
      \label{fig: FV error}
   \end{center}
\end{figure}

\begin{table}
\begin{center}
\renewcommand{\arraystretch}{0.9}
\begin{tabular}{|r|rr|rr|rr|rr|}
\hline $N$&\multicolumn{2}{c|}{$\gamma=1$}
&\multicolumn{2}{c|}{$\gamma=2$}
&\multicolumn{2}{c|}{$\gamma=3.4$} \\
\hline
 10& 1.0313e-02&       & 3.2357e-03&        & 2.0414e-03&     \\
 20& 7.2124e-03& 0.5159& 1.5719e-03&  1.0416& 6.2591e-04& 1.7055\\
 40& 5.0788e-03& 0.5060& 7.3189e-04&  1.1028& 1.7878e-04& 1.8078\\
 60& 4.1411e-03& 0.5034& 4.6047e-04&  1.1428& 8.4038e-05& 1.8619\\
 80& 3.5604e-03& 0.5252& 3.2953e-04&  1.1630&           & \\
 \hline
\end{tabular}
\caption{The error $|\|U-u\||_{d,10}$ and the temporal convergence rates for $\alpha=0.6$ with different choices of the mesh grading parameter $\gamma$.  }
\label{Time error for alpha=0.6}
\end{center}
\end{table}
	
\end{document}